\newtheorem{thm}{Theorem}
\newtheorem{lem}{Lemma}
\newtheorem{cor}{Corollary}
\newtheorem{defn}{Definition}
\newcommand\field[1]{\mathbb{#1}}
\newcommand\R{\field{R}}
\newcommand\ZZ{\field{Z}}
\newcommand\norm[1]{\left\Vert{} #1 \right\Vert}
\newcommand\lnorm[1]{\left\vert{} #1 \right\vert}
\newcommand\seminorm[1]{\left[ #1 \right]}
\newcommand\intlo[1]{\left(#1\right]}
\newcommand\bk[1]{\left\{ #1 \right\}}
\newcommand\ip[2]{\left\langle{} #1, #2 \right\rangle}
\newcommand\D[2]{\frac{\partial{} #1}{\partial{} #2}}
\newcommand\dD[2]{\frac{d #1}{d #2}}
\newcommand{\Ind}{\mathbf{1}}
\def\sbar{\overline{s}}
\def\ubar{\overline{u}}
\def\utilde{\widetilde{u}}
\def\shat{\widehat{s}}
\def\Omegahat{\widehat{\Omega}}
\newcommand\tildebar[1]{\widetilde{\overline{#1}}}
\def\ubartilde{\tildebar{u}}
\def\J{\mathcal{J}}
\def\I{\mathcal{I}}
\def\be{\begin{equation}}
\def\ee{\end{equation}}
\let\l\ell%
\def\solnspace{B_{2, x, t}^{5/2 + 2\alpha, 5/4 + \alpha}}
\def\densityspace{B_{2, x, t}^{1/2 + 2\alpha, 1/4 + \alpha}}
\def\coeffspace{C_{x, t}^{1/2+\alpha^*, 1/4 + \alpha^*}}
\def\ivspace{B_2^{3/2 + 2\alpha}}
\def\uxxtrspace{B_2^{1/2 + \alpha}}
\def\scontrolspace{W_2^2}
\def\gcontrolspace{B_2^{1/2 + \alpha}}
\def\fcontrolspace{B_{2, x, t}^{1, 1/4+\alpha}}
\def\muspace{B_2^{1/4}}
\def\adjointsolnspace{W_2^{2,1}}
\def\chigammaspace{B_{2,x,t}^{3/2+2\alpha^*,3/4+\alpha^*}}
\begin{document}

\title[Optimal Control of Parabolic Free Boundary Problems]{Frechet Differentiability in Besov Spaces in the Optimal Control of Parabolic Free Boundary Problems}
\author{Ugur G. Abdulla \& Jonathan Goldfarb}
\address{Department of Mathematical Sciences\\Florida Institute of Technology\\Melbourne, FL 32901, abdulla@fit.edu}
\date{}

\begin{abstract}
  We consider the inverse Stefan type free boundary problem, where information
  on the boundary heat flux and density of the sources are missing and must be
  found along with the temperature and the free boundary.
We pursue
  optimal control framework where boundary heat flux, density of sources, and free
  boundary are components of the control vector.
  The optimality criteria consists of the minimization of the $L_2$-norm
  declinations of the temperature measurements at the final moment, phase
  transition temperature, and final position of the free boundary.
  We prove the Frechet differentiability in Besov spaces, and derive the formula for the Frechet differential under minimal regularity assumptions on the data.
The result implies a necessary condition for optimal control and opens the way to the application of projective gradient methods in Besov spaces for the numerical solution of the inverse Stefan problem.
\end{abstract}

\maketitle

\section{Introduction and Statement of Main Results}

\subsection{Inverse Stefan Problem}
Consider the general one-phase Stefan problem:
\begin{gather}
  (a u_x)_x + b u_x + c u - u_{t} = f,~\text{in}~\Omega\label{eq:pde-1}\\
  u(x,0) = \phi (x),~0 \leq x \leq s(0)=s_0\label{eq:pde-init}\\
  a(0,t) u_x (0,t) = g(t),~0 \leq t \leq T\label{eq:pde-bound}\\
  a(s(t),t) u_x (s(t),t) + \gamma (s(t),t) s'(t) = \chi (s(t),t),~0 \leq t \leq T\label{eq:pde-stefan}\\
  u(s(t),t) = \mu (t),~0 \leq t \leq T,\label{eq:pde-freebound}
  \intertext{where}
  \Omega = \{(x,t): 0 < x < s(t),~0 < t \leq T\}\label{eq:pde-domain-defn}
\end{gather}
where $a, b, c, f, \phi, g, \gamma, \chi, \mu$ are given functions. Assume that $f(x,t)$ and $g(t)$ are not known, where $f(x,t)$ is the density of heat sources and $g(t)$ is the heat flux at $x=0$, the left boundary of our domain. In order to find $f(x,t)$ and $g(t)$ along with $u(x,t)$ and $s(t)$, we must have additional information. Assume that we are able to measure the temperature on our domain and the position of the free boundary at the final moment $T$.
\begin{equation}
  u(x,T) = w(x),~ 0 \leq x \leq s(T) = s_*.\label{eq:pde-finaltemp}
\end{equation}
Under these conditions, we are required to solve an \emph{inverse} Stefan problem (ISP): find a tuple
\[
  \bk{u(x,t),s(t),g(t),f(x,t)}
\]
that satisfies conditions~\eqref{eq:pde-1}--\eqref{eq:pde-finaltemp}.

The inverse Stefan type free boundary problems arise in the modeling and control of phase transition processes in various fields such as thermophysics, continuum mechanics and biomedical engineering. In particular, the ISP~\eqref{eq:pde-1}--\eqref{eq:pde-finaltemp} is motivated by the biomedical engineering problem of laser ablation of biological tissues. In this context, the free boundary $s(t)$ is the ablation depth at the moment $t$. This paper is a continuation of the research program on the inverse Stefan problem which started in recent papers~\cite{abdulla13,abdulla15}.

The inverse Stefan problem first appeared in~\cite{cannon67}; the problem
discussed was the determination of a heat flux on the fixed boundary for which
the solution of the Stefan problem has a desired free boundary.
The variational approach for solving this ill-posed inverse Stefan problem was developed in~\cite{budak72,budak73,budak74}.
In~\cite{vasilev69}, the problem of finding the optimal value for the external
temperature in order to achieve a given measurement of temperature at the final
moment was considered, and existence was proven.
In~\cite{yurii80}, the Frechet differentiability and
convergence of difference schemes was proven for the same problem, and Tikhonov
regularization was suggested.

Later development of the inverse Stefan problem proceeded along two lines:
inverse Stefan problems with given phase boundaries
in~\cite{alifanov95,bell81,budak74,cannon64,carasso82,ewing79,ewing79a,goldman97,hoffman81,sherman71},
or inverse problems with unknown phase boundaries in~\cite{baumeister80,fasano77,goldman97,hoffman82,hoffman86,jochum80,jochum80a,knabner83,ladyzhenskaya68,lurye75,niezgodka79,nochetto87,primicerio82,sagues82,talenti82}.
We refer to the monograph~\cite{goldman97} for a complete list of references for both types of inverse Stefan problem, both for linear and quasilinear parabolic equations.

The main methods developed for ISP are based on a variational formulation,
Frechet differentiability, and iterative gradient methods.
The established variational methods in earlier works fail in general to address two issues~\cite{abdulla13,abdulla15}:
\begin{itemize}
  \item The solution of ISP does not depend continuously on the phase transition temperature $\mu(t)$ from~\eqref{eq:pde-freebound}. A small perturbation of the phase transition temperature may imply significant change of the solution to the inverse Stefan problem, and some sort of regularization is required.
  \item In the existing formulation, at each step of the iterative method a
    Stefan problem must be solved (that is, the unknown heat flux
    $g$ and density of sources $f$ are given, and the corresponding $u(x,t)$ and $s(t)$ are calculated) which incurs a high computational cost.
\end{itemize}

A new method developed in~\cite{abdulla13,abdulla15} addresses both issues with a new variational formulation. The key insight is that the free boundary problem has a similar nature to an inverse problem, so putting them into the same framework gives a conceptually clear formulation of the problem; proving existence, approximation, and Frechet differentiability is a resulting challenge. Existence of the optimal control and the convergence of the sequence of discrete optimal control problems to the continuous optimal control problem was proved in~\cite{abdulla13,abdulla15}. Our goal in this work is to prove the Frechet differentiability and to derive the formula for the Frechet differential under the minimal regularity assumptions on the data. This presents a significant technical challenge. Since in the new variational formulation the free boundary is treated as a control parameter, variation of the cost functional reflects the high sensitivity of the solution to the PDE problem with respect to the variation of the domain. To overcome this technical challenge we implement and widely exploit the framework of Besov spaces. In fact, this paper reveals that Besov spaces provides the natural setting for the optimal control of parabolic free boundary problem, for it expresses the optimal relation between the regularity of the data and the regularity of the control parameters and state vectors.

The Frechet differentiability result implies a necessary condition for
optimality of a given control, and opens the way to the application of
projective gradient methods in Besov spaces for the numerical solution
of the inverse Stefan problem.
It should be noted that in~\cite{abdulla13,abdulla15} ISP is considered when additional measurement of the temperature is taken on the known boundary $x=0$, i.e.~\eqref{eq:pde-finaltemp} is replaced with the condition
\begin{equation}
  u(0,t) = \nu(t),~ 0 \leq t \leq T.\label{eq:pde-finaltemp1}
\end{equation}
The methods of this paper can be applied to ISP~\eqref{eq:pde-1}--\eqref{eq:pde-domain-defn},~\eqref{eq:pde-finaltemp1}, and only minor modification would be required to prove the Frechet differentiability and necessary condition for the optimality. In this paper we use~\eqref{eq:pde-finaltemp} as a required measurement to solve ISP.\@ In particular, this is motivated by the bioengineering problem of laser ablation of biological tissues.

The structure of the paper is as follows: in Section~\ref{notation} we define all the functional spaces. Section~\ref{ocp} formulates optimal control problem. Section~\ref{sec:main-results} describes the main results: existence of the optimal control is formulated in Theorem~\ref{thm:existence-opt-control}; Theorem~\ref{thm:gradient-result2} states the Frechet differentiability result and presents the formula for the Frechet differential;
in Corollary~\ref{optimalitycondition} we present the necessary condition for the optimal control in the form of the variational inequality.
In Section~\ref{sec:gradient-heuristic} a heuristic derivation of the Frechet differential is pursued. Section~\ref{PreliminaryResults} describes important preliminary results.
In Section~\ref{prel1} we recall the existence, uniqueness and energy estimates in Besov spaces for the Neumann problem to the second order linear parabolic PDEs.
In Section~\ref{traceembedding} we formulate optimal trace embedding results for the Besov spaces.
In Section~\ref{sec:conseq-of-energy-est} we prove three important technical lemmas.
Lemmas~\ref{lem:j-well-defined} and~\ref{lem:adjoint-solution-exists} are on the estimation of the Neumann problem, and adjoint PDE problem in respective Besov space norm.
In Lemma~\ref{lem:deltau-main-est} we prove an estimate on the increment of the state vector with respect to the control vector in a Besov space norm.
By applying Lemmas~\ref{lem:j-well-defined}-\ref{lem:deltau-main-est} in Section~\ref{sec:gradient-rigorous} we complete the proof of the main results.
Finally, conclusions are presented in Section~\ref{conclusions}.

\subsection{Notation}\label{notation}
We will use the notation
\[
  \Ind_{I}(x)
  = \begin{cases}
  1,&~ x \in I
  \\
  0,&~ x \not\in I
  \end{cases}
\]
for the indicator function of the set $I$, and $[r]$ for the integer part of the real number $r$.

We will require the notions of Sobolev-Slobodeckij or Besov spaces~\cite{besov79,besov79a,kufner77,nikolskii75}.
In this section, assume $U$ is a domain in $\R$ and denote by
\[
  Q_T = (0,1)\times \intlo{0, T}.
\]
\begin{itemize}
  \item For $\l \in \ZZ_+$, $W_p^{\l}(U)$ is the Banach space of measurable functions with finite norm
  \begin{align}
    \norm{u}_{W_p^{\l}(U)}
      & := \left(\int_U \sum_{k=0}^{\l} \lnorm{\dD{^k u}{x^k}}^p\,dx\right)^{1/p}\nonumber
  \end{align}
  \item For $\l \not\in \ZZ_+$, $B_p^{\l}(U)$ is the Banach space of measurable functions with finite norm

  \begin{align}
    \norm{u}_{B_p^{\l}(U)}
      & :=
    \norm{u}_{W_p^{[\l]}(U)}
    + \seminorm{u}_{B_p^{\l}(U)},
    ~\text{where}~\nonumber
    \\
    \seminorm{u}_{B_p^\l(U)}^p
      & :=\int_{U} \int_{U} \frac{\lnorm{
    \D{^{[\l] u(x)}}{x^{[\l]}}
    - \D{^{[\l] u(y)}}{x^{[\l]}}}^p
    }{\lnorm{x-y}^{1 + p(\l - [\l])}} \,dx \,dy.\nonumber
  \end{align}
  \item If $\l \in \ZZ_+$, the seminorm $\seminorm{u}_{B_p^{\l}(U)}$ is given by
  \begin{align}
    \seminorm{u}_{B_p^\l(U)}^p
      & :=
    \int_{-\infty}^\infty \int_{-\infty}^\infty \frac{\lnorm{
    \D{^{\l-1} u(x)}{x^{\l-1}}
    - 2 \D{^{\l-1} u\left(\frac{x+y}{2}\right)}{x^{\l-1}}
    + \D{^{\l-1} u(y)}{x^{\l-1}}
    }^p}{\lnorm{x-y}^{1+p}} \,dy \,dx\nonumber
  \end{align}\cite[thm.\ 5, p. 72]{solonnikov64}. By~\cite[\S 18, thm.\ 9]{besov79a}, it follows that $p=2$ and $\l \in \ZZ_+$, the $B_p^{\l}(U)$ norm is equivalent to the $W_p^{\l}(U)$ norm (i.e.\ the two spaces coincide.)

  \item $W_2^{1,1}({Q_T})$ is the Hilbert space of $L_2({Q_T})$ functions with a weak $x$- and $t$-derivatives belonging to $L_2({Q_T})$. The inner product in $W_2^{1,1}({Q_T})$ is

  \[
    \ip{u}{v}
    = \int_{Q_T} [u v+u_x v_x+u_t v_t] \, dx \,dt.
  \]
  \item $W_{2}^{2\l, \l}(\Omega)$, $\l=1,2,\ldots$ is the Hilbert space of $u \in L_{2}(\Omega)$ for which
  \[\D{^{r+s} u}{t^r x^s} \in L_2(\Omega),~0 \leq 2r+s \leq 2\l{}.\]
  The inner product in $W_{2}^{2\l,\l}(\Omega)$ is
  \[
    \ip{u}{v}
    =\int_{\Omega} \sum_{j=0}^{2\l} \sum_{2r+s=j}\D{^{r+s} u}{t^r x^s} \D{^{r+s} v}{t^r x^s} \, dx\,dt,
  \]
  where the summation indices $r,s \geq 0$ satisfy $2r+s=j$, $j=0, \ldots, 2\l$.
  \item Let $1 \leq p < \infty$, $\l_1,\l_2>0$. The Besov space $B_{p,x,t}^{\l_1, \l_2}(Q_T)$ is defined as the closure of the set of smooth functions under the norm
  \begin{gather*}
    \norm{u}_{B_{p,x,t}^{\l_1, \l_2}(Q_T)}
    = \left(\int_0^T \norm{u(x,t)}_{B_p^{\l_1}(0,1)}^p \,dt\right)^{1/p}
    \\
    + \left(\int_0^1 \norm{u(x,t)}_{B_p^{\l_2}[0,T]}^p \,dx\right)^{1/p}.
  \end{gather*}
  When $p=2$, if either $\l_1$ or $\l_2$ is an integer, the Besov seminorm may be replaced with the corresponding Sobolev seminorm due to equivalence of the norms.
  \item The H\"older space $C_{x,t}^{\alpha,\alpha/2}(Q_T)$ is the set of continuous functions with $[\alpha]$ $x$-derivatives and $[\alpha/2]$ $t$-derivatives, and for which the highest order $x$- and $t$-derivatives satisfy H\"older conditions of order $\alpha-[\alpha]$ and $\alpha/2-[\alpha/2]$, respectively.
\end{itemize}
\subsection{Optimal Control Problem}\label{ocp}
Fix any $\alpha>0$.
Consider the minimization of the functional
\begin{gather}
  \J(v) = \beta_0 \int_0^{s(T)} \lnorm{u(x, T; v) - w(x)}^2\,dx + \beta_1 \int_0^T \lnorm{u(s(t), t;v) - \mu(t)}^2\,dt +\nonumber
  \\
  + \beta_2 \lnorm{s(T) - s_*}^2\label{eq:functional}
\end{gather} on the control set
\begin{align}
  V_R & =\Big\{ v=(f, g, s) \in H := \fcontrolspace(D)\times \gcontrolspace[0, T]\times \scontrolspace[0,T];
  \nonumber
  \\
      & \qquad s(0) = s_0,~s'(0) = 0,
  ~g(0) = a(0, 0) \phi'(0),
  ~0 < \delta \leq s(t) \leq \l;\nonumber
  \\
      & \qquad \norm{v}_H := \max\left( \norm{f}_{\fcontrolspace(D)}, \norm{g}_{\gcontrolspace[0,T]}, \norm{s}_{\scontrolspace[0,T]}\right)\leq R
  \Big\},\label{eq:control-set}
\end{align}
where $\beta_0, \beta_1, \beta_2 \geq 0$ and $\l, \delta, R > 0$ are given. Define
\[
  D = \bk{(x,t) : 0\leq x\leq \l,~ 0\leq t\leq T}.
\]
For a given control vector $v \in V_R$, the state vector $u(x, t; v)$ is a
solution to the Neumann problem~\eqref{eq:pde-1}--\eqref{eq:pde-stefan}.
The formulated optimal control problem~\eqref{eq:functional}--\eqref{eq:control-set} will be called Problem $\I$.
Since the data appearing in the Neumann problem~\eqref{eq:pde-1}--\eqref{eq:pde-stefan} are in general non-smooth, the solutions may not exist in the classical sense.
The notion of solution is understood in a weak sense, i.e.\ for a fixed control vector $v \in V_R$, $u \in W_2^{2,1}(\Omega)$ is called a solution of the Neumann problem~\eqref{eq:pde-1}--\eqref{eq:pde-stefan} if it satisfies the equation~\eqref{eq:pde-1} and conditions~\eqref{eq:pde-init}--\eqref{eq:pde-stefan} pointwise almost everywhere.

\subsection{Statement of Main Results}\label{sec:main-results}
Let $\alpha>0$ be fixed as in~\eqref{eq:control-set}. The main results are established under the assumptions
\begin{gather}
  0 < a_0 \leq a(x,t)~\text{in}~D\label{eq:a-ellipticity-cond}
  \\
  a,a_x,b,c \in \coeffspace(D)\label{eq:datacond-coeff}
  \\
  w \in \ivspace(0,\l),\quad \phi \in \ivspace(0, s_{0}),\label{eq:datacond-iv}
  \\
  \chi,\gamma \in \chigammaspace(D)\label{eq:datacond-4}
  \\
  \mu \in \muspace[0,T],\label{eq:datacond-mu}
  \intertext{where $\alpha^* > \alpha$ is arbitrary, and $\chi$, $\phi$ satisfy the compatibility condition}
  \chi(s_{0},0) = \phi'(s_{0})a(s_{0},0).\label{eq:datacond-compat}
\end{gather}
Given a control vector $v \in V_R$, under the
conditions~\eqref{eq:a-ellipticity-cond}--\eqref{eq:datacond-compat} there
exists a unique pointwise a.e.\ solution $u \in W_2^{2,1}(\Omega)$ of the
Neumann problem~\eqref{eq:pde-1}--\eqref{eq:pde-stefan}
(\cite{ladyzhenskaya68,solonnikov64}.)
\begin{defn}\label{defn:adjoint}
  For given $v$ and $u = u(x, t; v)$, $\psi \in \adjointsolnspace(\Omega)$ is a solution to the adjoint problem if
  \begin{gather}
    \big(a\psi_x\big)_x - (b\psi)_x + c\psi + \psi_t = 0,\quad\text{in}~\Omega\label{eq:adj-pde}
    \\
    \psi(x, T) = 2\beta_0(u(x, T) - w(x)),~0 \leq x \leq s(T)\label{eq:adj-finalmoment}
    \\
    a(0, t)\psi_x(0, t) - b(0, t)\psi(0, t)=0,~0 \leq t \leq T\label{eq:adj-robin-fixed}
    \\
    \Big[a\psi_x - (b + s'(t))\psi\Big]_{x=s(t)} = 2\beta_1(u(s(t), t) - \mu(t)), ~0 \leq t \leq T\label{eq:adj-robin-free}
  \end{gather}
\end{defn}
Given a control vector $v \in V_R$ and the corresponding state vector $u \in
W_2^{2,1}(\Omega)$ there exists a unique pointwise a.e.\ solution $\phi \in
W_2^{2,1}(\Omega)$ of the adjoint problem~\eqref{eq:adj-pde}--\eqref{eq:adj-robin-free}
(\cite{ladyzhenskaya68,solonnikov64})
The main results of this work are as follows:
\begin{thm}[Existence of an Optimal Control]\label{thm:existence-opt-control}
  Problem $\I$ has a solution.
  That is,
  \[
    V_* = \bk{v \in V_R: \J(v) = J_* =: \inf_{v \in V_R} \J(v)} \neq \emptyset.
  \]
\end{thm}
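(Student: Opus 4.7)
The plan is to apply the direct method of the calculus of variations. Take a minimizing sequence $\{v_n\} = \{(f_n, g_n, s_n)\} \subset V_R$ with $\J(v_n) \to J_*$. Since each factor of $H = \fcontrolspace(D) \times \gcontrolspace[0,T] \times \scontrolspace[0,T]$ is a reflexive Hilbert space ($p=2$ throughout) and the sequence is bounded by $R$, Banach--Alaoglu yields a subsequence (not relabeled) with $f_n \rightharpoonup f_*$, $g_n \rightharpoonup g_*$, $s_n \rightharpoonup s_*$ in the respective spaces. I would then invoke the compact embeddings $W_2^2[0,T] \hookrightarrow\hookrightarrow C^1[0,T]$ and $\gcontrolspace[0,T] \hookrightarrow\hookrightarrow C[0,T]$ to upgrade $s_n \to s_*$ in $C^1[0,T]$ and $g_n \to g_*$ in $C[0,T]$. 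The pointwise constraints $s_*(0)=s_0$, $s_*'(0)=0$, $g_*(0) = a(0,0)\phi'(0)$, and $\delta \leq s_*(t) \leq \ell$ therefore pass to the limit. Weak lower semicontinuity of the norms yields $\|v_*\|_H \leq R$, so $v_* \in V_R$.

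Next I would establish convergence of the state vectors $u_n := u(x,t;v_n)$. The chief difficulty is that the domain $\Omega_n = \{0 < x < s_n(t)\}$ varies with the control, so I would transform to the fixed cylinder $Q_T = (0,1) \times \intlo{0,T}$ via $y = x/s_n(t)$, producing $\tildebar{u}_n(y,t) = u_n(s_n(t)y, t)$. The transformed PDE has coefficients depending smoothly on $s_n, s_n'$ and a free term expressible through $f_n$; the Neumann problem estimates in Besov spaces (Lemma~\ref{lem:j-well-defined}) then provide a uniform bound on $\tildebar{u}_n$ in $\solnspace(Q_T)$. Extracting a weak limit and using the strong $C^1$-convergence of $s_n$ to pass to the limit in the (transformed) weak formulation identifies $\tildebar{u}_*$ as the state vector associated with $v_*$, which in the original variables is $u(x,t;v_*)$ on $\Omega_*$.

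Finally, for the lower semicontinuity of $\J$: the uniform $\solnspace$-bound, combined with the optimal trace embeddings collected in Section~\ref{traceembedding}, yields \emph{strong} convergence of the final-time trace $\tildebar{u}_n(\cdot,T)$ in $L_2(0,1)$ and of the lateral trace $\tildebar{u}_n(1,\cdot)$ in $L_2(0,T)$. Together with $s_n \to s_*$ uniformly, this upgrades each of the three terms in $\J(v_n)$ to a convergent sequence whose limit equals the corresponding term in $\J(v_*)$ (after undoing the change of variables and absorbing the moving endpoint via uniform convergence of $s_n(T)$). Hence $\J(v_*) = \lim_n \J(v_n) = J_*$, giving $v_* \in V_*$.

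The principal obstacle is the $s$-dependence of the domain and the resulting nonlinear appearance of $s_n, s_n'$ in the transformed coefficients. Propagating weak convergence of $v_n$ through these nonlinearities into the PDE, while keeping the argument within the Besov scale dictated by the minimal regularity assumptions~\eqref{eq:a-ellipticity-cond}--\eqref{eq:datacond-compat}, is the delicate step; the compact embedding $W_2^2 \hookrightarrow\hookrightarrow C^1$ for $s$ is what makes the coefficient convergence strong enough to close the argument.
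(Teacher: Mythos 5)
Your proposal is correct and follows essentially the same route as the paper: a minimizing sequence in the weakly compact set $V_R$, strong convergence of the controls via compact Sobolev/Besov embeddings, a uniform energy estimate for the states transformed to the fixed cylinder $Q_T$ by $y=x/s_n(t)$, passage to the limit in the weak formulation to identify the limit state with $u(\cdot,\cdot;v_*)$, and compact trace embeddings plus uniform convergence of $s_n$ to pass to the limit in each term of $\J$. The only cosmetic differences are that the paper works with the weaker $W_2^{2,1}$ energy estimate rather than the full $\solnspace$ bound and uses strong $W_2^1\times L_2\times L_2$ convergence of the controls instead of your $C^1\times C$ upgrade, neither of which changes the argument.
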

\begin{thm}[Frechet Differentiability]\label{thm:gradient-result2}
  The functional $\J(v)$ is differentiable in the sense of Frechet, and the first variation is
  \begin{align}
    d \J(v)
      & = -\int_{\Omega} \psi \delta f \,dx\,dt - \int_{0}^T \psi(0,t) {\delta g}(t)\,dt +\nonumber
    \\
      & \quad + \int_0^T \left[2 \beta_1 (u-\mu) u_x
    + \psi \left(\chi_x - \gamma_x s' -\big(a u_x\big)_x\right)\right]_{x=s(t)}{\delta s}(t)\, dt - \nonumber
    \\
      & \quad -\int_0^T \big[\gamma \psi\big]_{x=s(t)}{\delta s}'(t)\,dt + \nonumber
    \\
      & \quad + \left(\beta_0\lnorm{u(s(T),T) - w(s(T))}^2 + 2\beta_2(s(T)-s_*)\right){\delta s}(T),\label{eq:gradient-full}
  \end{align}
  where $\psi$ is a solution to the adjoint problem in the sense of definition~\ref{defn:adjoint}, and $\delta v = ({\delta s}, {\delta g}, {\delta f})$ is a variation of the control vector $v \in V_R$ such that $v + \delta v \in V_R$.
\end{thm}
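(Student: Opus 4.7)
The plan is to compute the increment $\Delta\J := \J(v+\delta v)-\J(v)$ directly, extract its linear-in-$\delta v$ part by expressing the hidden dependence on $\Delta u := u(\cdot,\cdot;v+\delta v)-u(\cdot,\cdot;v)$ through the adjoint state $\psi$ of Definition~\ref{defn:adjoint}, and bound the remainder using the Besov-norm increment estimate in Lemma~\ref{lem:deltau-main-est} together with the trace embeddings of Section~\ref{traceembedding}. The whole argument may be viewed as a rigorous reproduction of the heuristic derivation in Section~\ref{sec:gradient-heuristic}.

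The first step is to expand each of the three terms of $\J$ separately. For the first term, since the upper limit $s(T)$ changes to $s(T)+\delta s(T)$, I would split the integral into $\int_0^{s(T)}+\int_{s(T)}^{s(T)+\delta s(T)}$ and Taylor-expand both pieces, producing the linear contribution $2\beta_0\int_0^{s(T)}(u-w)\Delta u\,dx$ together with the boundary term $\beta_0\lnorm{u(s(T),T)-w(s(T))}^2\delta s(T)$ on the last line of~\eqref{eq:gradient-full}. For the second term, the identity $\tilde u(\tilde s(t),t)-u(s(t),t) = \Delta u(s(t),t) + u_x(s(t),t)\delta s(t) + \text{h.o.t.}$ yields $2\beta_1\int_0^T(u-\mu)(u_x\delta s+\Delta u)\,dt$, and the third term trivially contributes $2\beta_2(s(T)-s_*)\delta s(T)$. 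Since $u$ and $\tilde u$ live on different domains $\Omega$ and $\tilde\Omega$, I would then pull both solutions back to a common reference cylinder $(0,1)\times(0,T]$ via the diffeomorphism $y=x/s(t)$, so that the increment satisfies a linear parabolic equation on a fixed domain whose right-hand side and boundary data are, modulo $o(\|\delta v\|_H)$, linear in $\delta f$, $\delta g$, and $\delta s$.

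The central step is then to multiply this linearized equation by $\psi$ and integrate by parts over $\Omega$. The adjoint PDE~\eqref{eq:adj-pde} and boundary conditions~\eqref{eq:adj-finalmoment}--\eqref{eq:adj-robin-free} are chosen precisely so that the resulting boundary integrals absorb the three $\Delta u$-linear terms identified above: the $t=T$ integral cancels $2\beta_0\int(u-w)\Delta u(\cdot,T)\,dx$; the $x=0$ integral produces $-\int_0^T\psi(0,t)\delta g(t)\,dt$ via $\delta(au_x)=\delta g$ there; and the $x=s(t)$ integral cancels $2\beta_1\int(u-\mu)\Delta u(s(t),t)\,dt$ while leaving, once the Stefan condition~\eqref{eq:pde-stefan} is itself linearized in $\delta s$, the terms $\psi[\chi_x-\gamma_x s'-(au_x)_x]\delta s$ and $-\gamma\psi\delta s'$ along $x=s(t)$. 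The remaining volume integral yields $-\int_\Omega\psi\delta f\,dx\,dt$, and summing everything reproduces~\eqref{eq:gradient-full}.

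The main obstacle is not this heuristic identification of the gradient but the verification that the remainder $\Delta\J - d\J$ is genuinely $o(\|\delta v\|_H)$, not merely $O(\|\delta v\|_H)$. Because $\delta s$ only lies in $W_2^2[0,T]$ and $\delta g, \delta f$ in Besov spaces of rather modest regularity, the traces of $\Delta u$ on the moving curve $x=s(t)$ and on the hyperplane $t=T$ have just barely enough smoothness to be meaningful in the required norms. Controlling these traces, as well as the quadratic remainders produced by the Taylor expansions and the domain pullback, requires the Besov-norm estimate on $\Delta u$ furnished by Lemma~\ref{lem:deltau-main-est} together with the sharp trace theorems of Section~\ref{traceembedding}. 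Threading the regularity indices so that every remainder term lies strictly in $o(\|\delta v\|_H)$ is where the full strength of the Besov-space framework becomes indispensable, and forms the technical heart of the proof.
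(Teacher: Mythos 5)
Your proposal is correct and follows essentially the same route as the paper: a rigorous re-run of the heuristic Lagrange-multiplier derivation of Section~\ref{sec:gradient-heuristic}, with the adjoint problem absorbing the $\Delta u$-linear boundary and volume terms, and with all quadratic remainders (the paper writes them out explicitly as $R_1,\dots,R_{22}$) shown to be $o(\norm{\delta v}_H)$ via the increment estimate of Lemma~\ref{lem:deltau-main-est}, the energy estimates of Lemmas~\ref{lem:j-well-defined} and~\ref{lem:adjoint-solution-exists}, the trace embeddings, and continuity of the $L_2$ norm under shift. The only cosmetic difference is that the paper performs the integration by parts directly on $\Omegahat$ (splitting $[0,T]$ into $T_1$, $T_2$) and uses the pullback $y=x/s(t)$ only for the norm estimates, whereas you propose pulling back before identifying the linear part; this does not change the substance of the argument.
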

\begin{cor}[Optimality Condition]\label{optimalitycondition}
  If $\bar{v} = (\bar{f}, \bar{g}, \bar{s})$ is an optimal control, then the following
  variational inequality is satisfied:
  \begin{align}
    &-\int_{\Omega} \psi \left( f(t) - \bar{f}(t) \right) \,dx\,dt - \int_{0}^T \psi(0,t) \left( g(t) - \bar{g}(t) \right)\,dt +\nonumber
    \\
      & \quad + \int_0^T \left[2 \beta_1 (u-\mu) u_x
    + \psi \left(\chi_x - \gamma_x s' -\big(a u_x\big)_x\right)\right]_{x=s(t)}\left( s(t) - \bar{s}(t) \right)\, dt - \nonumber
    \\
      & \quad -\int_0^T \big[\gamma \psi\big]_{x=s(t)}\left( s'(t)-\bar{s}(t) \right){\delta s}'(t)\,dt + \nonumber
    \\
      & \quad + \left(\beta_0\lnorm{u(s(T),T) - w(s(T))}^2 + 2\beta_2(s(T)-s_*)\right)\left( s(T)-\bar{s}(T) \right)
    \geq 0\label{eq:optimality-condition}
  \end{align}
  for arbitrary $v = (f, g, s) \in V_R$.
\end{cor}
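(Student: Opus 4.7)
The plan is to invoke the standard variational argument that reduces optimality on a convex set to a first-order condition via the Frechet derivative provided by Theorem~\ref{thm:gradient-result2}. First I would verify that $V_R$ is convex: the side conditions $s(0) = s_0$, $s'(0) = 0$, $g(0) = a(0,0)\phi'(0)$ are affine; the pointwise bounds $\delta \leq s(t) \leq \l$ are preserved under convex combinations; and $\norm{v}_H \leq R$ defines a ball in the Banach space $H$. Consequently, for every $v \in V_R$ and every $\epsilon \in [0,1]$, the combination $v_\epsilon := \bar{v} + \epsilon(v - \bar{v})$ again lies in $V_R$, so $\delta v := \epsilon(v - \bar{v})$ is an admissible increment around $\bar{v}$ in the sense of the hypothesis $\bar{v} + \delta v \in V_R$ required by Theorem~\ref{thm:gradient-result2}.

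Since $\bar{v}$ minimizes $\J$, we have $\J(v_\epsilon) \geq \J(\bar{v})$ for every $\epsilon \in (0,1]$. Writing the Frechet expansion
\[
\J(v_\epsilon) - \J(\bar{v}) = d\J(\bar{v})\big(\epsilon(v - \bar{v})\big) + o\big(\epsilon\,\norm{v-\bar{v}}_H\big),
\]
using linearity of $d\J(\bar{v})$ in its argument, dividing by $\epsilon > 0$, and letting $\epsilon \to 0^+$ yields
\[
d\J(\bar{v})(v - \bar{v}) \geq 0.
\]

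Finally I would substitute the admissible variation $\delta v = (f - \bar{f},\, g - \bar{g},\, s - \bar{s})$ directly into formula~\eqref{eq:gradient-full} of Theorem~\ref{thm:gradient-result2}, with the convention that the functions $u$, $\psi$, $s$, $\chi$, $\gamma$, $\mu$ appearing there refer to the state and adjoint systems associated with the optimal control $\bar{v}$. Term by term, the five summands of~\eqref{eq:gradient-full} reproduce the five summands of~\eqref{eq:optimality-condition}, so the inequality $d\J(\bar{v})(v - \bar{v}) \geq 0$ is exactly~\eqref{eq:optimality-condition}.

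The main obstacle here is not analytic but bookkeeping: one must verify that the convexity of $V_R$ suffices to produce increments $\delta v$ with $\bar{v} + \delta v \in V_R$, which is precisely what Theorem~\ref{thm:gradient-result2} requires. This is automatic for $\epsilon \in [0,1]$, which is all that is needed to pass to the limit $\epsilon \to 0^+$. No additional PDE estimates are needed beyond those already established; Corollary~\ref{optimalitycondition} is an immediate consequence of the optimality of $\bar{v}$ together with the linearity of the Frechet differential in the variation $\delta v$.
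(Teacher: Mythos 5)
Your proposal is correct and is exactly the (implicit) argument the paper relies on: convexity of $V_R$ makes $\bar{v}+\epsilon(v-\bar{v})$ admissible, optimality plus the Frechet expansion of Theorem~\ref{thm:gradient-result2} gives $d\J(\bar{v})(v-\bar{v})\geq 0$, and substituting $\delta v=(f-\bar{f},g-\bar{g},s-\bar{s})$ into~\eqref{eq:gradient-full} (evaluated at the optimal state and adjoint) yields~\eqref{eq:optimality-condition}. No gaps; this is the standard first-order necessary condition on a convex control set, which is precisely how the corollary follows in the paper.
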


\section{Heuristic Derivation of the Frechet Differential}\label{sec:gradient-heuristic}
\def\L{\mathcal{L}}
To give a first indication of the form of the gradient, we apply the heuristic method of Lagrange-type multipliers; the rigorous proof follows in Section~\ref{sec:gradient-rigorous}. Consider the functional:
\[
  \L(g,f,s,u,\psi) = \J(v) + \int_0^T \int_0^{s(t)} \psi\left[(a u_x)_x + b u_x + c u - u_t -f\right]\,dx \,dt.
\]
Define $\delta v = (\delta s, \delta g, \delta f)$, $\bar{v} = v+\delta v = (\sbar, \bar{g}, \bar{f})$. Let $\ubar(x,t) = \ubar(x,t,\bar{v})$.
We will also denote by $\tilde{s}(t)=s(t)+\theta(t)\delta s(t)$ where $0 \leq \theta(t) \leq 1$ standing for all functions arising from application of mean value theorem in the region between $s(t)$ and $\sbar(t)$. Define
\begin{gather*}
  \shat = \min(s, \sbar),\quad 0 \leq t \leq T,\quad
  \Omegahat = \bk{(x,t):0 < x < \shat(t),~ 0 < t \leq T}
  \\
  \delta u(x,t) = \ubar(x,t) - u(x,t)~\text{in}~\Omegahat.
\end{gather*}
The increment $\delta v$ must be made in such a way that $v + \delta v \in V_R$, so in particular $\delta s(0)=0$, $\delta s'(0)=0$.
Similarly, the incremented solution $\ubar$ must satisfy the initial condition~\eqref{eq:pde-init} and corresponding boundary conditions~\eqref{eq:pde-bound} and~\eqref{eq:pde-stefan}, so in particular
\begin{equation}
  \delta u (x,0) = 0,
  \quad a(0,t)\delta u_x(0,t)=\delta g(t).\label{eq:gradient-heuristic-deltau-bc}
\end{equation}
In what follows, all terms of higher than linear order with respect to $\delta v$ will be absorbed into the expression $o(\delta v)$.
Partition the time domain as $[0,T]=T_{1}\cup T_{2}$ where
\begin{align*}
  T_1 & =\bk{t \in [0,T]: \delta s(t) <0},\quad
  T_2 = [0,T] \setminus T_1 = \bk{t \in [0,T]: \delta s(t) \geq 0}.
\end{align*}
Calculate
\begin{align}
  \Delta \L
      & =\Delta \J + \Delta I,\label{eq:lagrange-func-increment}
  \intertext{where}
  \Delta I
      & = \int_0^T \int_0^{\sbar(t)} \psi\left[(a \ubar_x)_x + b \ubar_x + c \ubar - \ubar_t -f-\delta f\right]\,dx \,dt - \nonumber
  \\
      & \quad -\int_0^T \int_0^{s(t)} \psi\left[(a u_x)_x + b u_x + c u - u_t -f\right]\,dx \,dt\label{eq:lagrange-inc-integral}
  \\
  \Delta \J
      & = \J(v+\delta v) - \J(v) = J_1 + J_2 + J_3
  \intertext{and the terms $J_1$-$J_3$ are given by}
  J_1 & =\beta_0\left[\int_0^{\sbar(T)}\lnorm{\ubar(x,T)-w(x)}^2 \,dx - \int_0^{s(T)}\lnorm{u(x,T)-w(x)}^2 \,dx\right]\nonumber
  \\
  J_2 & = \beta_1\left[\int_0^T\lnorm{\ubar(\sbar(t),t)-\mu(t)}^2 \,dt -\int_0^T\lnorm{u(s(t),t)-\mu(t)}^2 \,dt\right]
  = J_{21} + J_{22} \nonumber
  \\
  J_3 & = \beta_2\lnorm{\sbar(T)-s_*}^2 - \lnorm{s(T)-s_*}^2,\label{eq:functional-increment-1}
  \intertext{where}
  J_{21}
      & = \beta_1\int_{T_1}\lnorm{\ubar(\sbar(t),t)-\mu(t)}^2-\lnorm{u(s(t),t)-\mu(t)}^2 \,dt,\nonumber
  \\
  J_{22}
      & = \beta_1\int_{T_2}\lnorm{\ubar(\sbar(t),t)-\mu(t)}^2-\lnorm{u(s(t),t)-\mu(t)}^2 \,dt.\nonumber
\end{align}
In $J_{1}$, calculate
\begin{align}
  J_{1} & =\beta_0\int_0^{\shat(T)}\left[\lnorm{\ubar(x,T)-w(x)}^2 - \lnorm{u(x,T)-w(x)}^2\right] \,dx+\nonumber
  \\
        & \quad + \beta_0\int_{\shat(T)}^{\sbar(T)}\lnorm{\ubar(x,T)-w(x)}^2 \, dx- \beta_0\int_{\shat(T)}^{s(T)}\lnorm{u(x,T)-w(x)}^2 \, dx.\nonumber
\end{align}
The first term comprises the increment of a quadratic functional, while
\begin{gather*}
  \int_{\shat(T)}^{\sbar(T)} \lnorm{\ubar(x,T)-w(x)}^2\,dx
  = \Ind_{T_2}(T)\lnorm{u(s(T),T) - w(s(T))}^2\delta s(T) + o(\delta v)\nonumber
  \\
  \int_{\shat(T)}^{s(T)} \lnorm{u(x,T)-w(x)}^2\,dx
  = -\Ind_{T_1}(T)\lnorm{u(s(T),T) - w(s(T))}^2\delta s(T) 
  + o(\delta v).\nonumber
\end{gather*}
Hence
\begin{gather}
  J_{1} = \beta_0\int_0^{\shat(T)} \big[2(u-w) \delta u\big]_{t=T}\,dx + \beta_0\lnorm{u(s(T),T) - w(s(T))}^2 \delta s(T)\nonumber
  \\
  \quad + o(\delta v).\label{eq:gradient-heuristic-split-J-1-final}
\end{gather}
Using the identity for $t \in T_1$
\begin{align}
  \ubar(\sbar(t),t)
    & =u(s(t),t) + u_x(\tilde{s}(t),t)\delta s(t) + \delta u(\sbar(t),t)\nonumber
  \intertext{in $J_{21}$, it follows that}
  J_{21}
    & =\beta_1\int_{T_1}2\big( u(s(t),t)-\mu(t)\big)u_x(s(t),t)\delta s(t) \,dt +\nonumber
  \\
    & \quad + \beta_1\int_{T_1}2\big( u(s(t),t)-\mu(t)\big)\delta u(\sbar(t),t) \,dt + o(\delta v).\label{eq:gradient-heuristic-split-J-21}
\end{align}
Similarly, use the identity for $t \in T_2$
\begin{align}
  \ubar(\sbar(t),t)
    & =u(s(t),t) + \delta u(s(t),t) + \ubar_x(\tilde{s}(t),t)\delta s(t)\nonumber
  \intertext{in $J_{22}$ to derive}
  J_{22}
    & =\beta_1\int_{T_2} 2\big( u(s(t),t) - \mu(t)\big) \delta u(s(t),t) \,dt +\nonumber
  \\
    & \quad +\beta_1\int_{T_2} 2\big( u(s(t),t) - \mu(t)\big) u_x(s(t),t)\delta s(t) \,dt + o(\delta v).\label{eq:gradient-heuristic-split-J-22}
\end{align}
From~\eqref{eq:gradient-heuristic-split-J-21},~\eqref{eq:gradient-heuristic-split-J-22}, it follows that
\begin{align}
  J_{2}
    & =\beta_1\int_0^T 2\big( u(s(t),t)-\mu(t)\big)u_x(s(t),t)\delta s(t) \,dt + \nonumber
  \\
    & \quad + \beta_1\int_0^T 2\big( u(s(t),t)-\mu(t)\big)\delta u(\shat(t),t) \,dt + o(\delta v).\label{eq:gradient-heuristic-split-J-2-final}
\end{align}

For $J_3$, we have
\begin{align}
  J_3
    & = 2\beta_2(s(T)-s_*)\delta s(T)+o(\delta v).\label{eq:gradient-heuristic-split-J-3-final}
\end{align}
Since $u$ solves PDE~\eqref{eq:pde-1} pointwise almost everywhere, from~\eqref{eq:lagrange-inc-integral} it follows that $\Delta I = 0$ and
\begin{align}
  \Delta I
    & =\int_{\Omegahat} \psi \left[ \big( a {\delta u}_x\big)_x + b {\delta u}_x + c \delta u - {\delta u}_t - \delta f\right] \,dx \,dt.
\end{align}
Integrating by parts with respect to $x$- and $t$-variables, we derive
\begin{align}
  \Delta I & =
  \int_{\Omegahat} \left[(a \psi_x)_x - (\psi b)_x + \psi c + \psi_t\right]\delta u \,dx \,dt
  + \int_0^T \big[a \psi \delta u_x\big]_{x=\shat(t)} \, dt + \nonumber
  \\
           & \quad + \int_0^T \left[\big(-a \psi_x + \big(b + \shat'\big) \psi \big) {\delta u}\right]_{x=\shat(t)} \, dt
  -\int_{\Omegahat} \psi\delta f \,dx \,dt - \nonumber
  \\
           & \quad - \int_0^T \psi(0,t) \delta g(t) \, dt
  + \int_0^T \big[(a \psi_x - b \psi) \delta u\big]_{x=0} \, dt - \nonumber
  \\
           & \quad - \int_0^{\shat(T)} \psi(x,T)\delta u(x,T) \,dx=\nonumber
  \\
           & = \int_{\Omegahat} \left[(a \psi_x)_x - (\psi b)_x + \psi c + \psi_t\right]\delta u \,dx \,dt
  + \int_{T_1} \big[a \psi {\delta u}_x\big]_{x=\sbar(t)} \, dt + \nonumber
  \\
           & \quad + \int_0^T \left[-a \psi_x + \big(b + s'\big) \psi \right]_{x=s(t)} \delta u(\shat(t),t) \, dt
  -\int_{\Omega} \psi\delta f \,dx \,dt + \nonumber
  \\
           & \quad + \int_{T_2} \big[a \psi {\delta u}_x \big]_{x=s(t)} \, dt
  - \int_0^T \psi(0,t) \delta g(t) \, dt +
  \nonumber
  \\
           & \quad + \int_0^T \big[(a \psi_x - b \psi) \delta u\big]_{x=0} \, dt - \int_0^{\shat(T)} \psi(x,T)\delta u(x,T) \,dx + o(\delta v).\label{eq:gradient-heuristic-deltaI-sum-1}
\end{align}
Using the boundary conditions for $\ubar$ on the moving boundary $\sbar$~\eqref{eq:pde-stefan} and mean value theorem, it follows that for $t \in T_1$,
\begin{gather}
  a(\sbar(t),t)\delta u_x(\sbar(t),t) = \left[\chi(\sbar(t),t) - \gamma(\sbar(t),t)\sbar'(t) - a(\sbar(t),t)u_x(\sbar(t),t)\right] - \nonumber
  \\
  -\left[\chi(s(t),t) - \gamma(s(t),t)s'(t) - a(s(t),t)u_x(s(t),t)\right]\nonumber
  \\
  =\chi_x(\tilde{s}(t),t)\delta s(t) - \gamma_x(\tilde{s}(t),t)\delta s(t)\sbar'(t)-\gamma(s(t),t){\delta s}'(t) - (au_x)_x\Big\vert_{x=\tilde{s}(t)}\delta s(t)\label{eq:gradient-heuristic-movingbdy-t1-ident}
  \intertext{Using~\eqref{eq:gradient-heuristic-movingbdy-t1-ident} it follows that}
  \int_{T_1}\left[\psi a \delta u_x\right]_{x=\sbar(t)}\, dt    =\int_{T_1}\psi(\sbar(t),t)\big[\chi_x {\delta s} - \gamma_x {\delta s} \sbar' - (au_x)_x {\delta s}\big]_{x=\tilde{s}(t)}\, dt -\nonumber
  \\
  - \int_{T_1}\psi(\sbar(t),t)\gamma(s(t),t){\delta s}'(t)\, dt\nonumber
  \\
  =\int_{T_1}\Big[\psi \big(\chi_x {\delta s} - \gamma_x {\delta s} s'-\gamma {\delta s}' - (au_x)_x {\delta s}\big)\Big]_{x=s(t)}\, dt + o(\delta v).\label{eq:gradient-heuristic-movingbdy-t1-int-ident}
\end{gather}
Applying the boundary condition~\eqref{eq:pde-stefan} for $u$ on the moving boundary $s$ similarly to the derivation of~\eqref{eq:gradient-heuristic-movingbdy-t1-ident}--\eqref{eq:gradient-heuristic-movingbdy-t1-int-ident}, it follows that, for $t \in T_2$,
\begin{gather}
  \int_{T_2} \left[\psi a \delta u_x \right]_{x=s(t)} \,dt
  =\int_{T_2} \Big[\psi \big(\chi_x {\delta s} - \gamma_x s' {\delta s} - \gamma {\delta s}'\big) - (a u_x)_x \delta s\Big]_{x=s(t)} \,dt +\nonumber
  \\
  + o(\delta v).\label{eq:gradient-heuristic-movingbdy-t2-int-ident}
\end{gather}

Using~\eqref{eq:gradient-heuristic-movingbdy-t1-int-ident} and~\eqref{eq:gradient-heuristic-movingbdy-t2-int-ident} in~\eqref{eq:gradient-heuristic-deltaI-sum-1}, it follows that
\begin{align}
  \Delta I
    & = \int_{\Omegahat} \left[(a \psi_x)_x - (\psi b)_x + \psi c + \psi_t\right]\delta u \,dx \,dt + \nonumber
  \\
    & \quad + \int_0^T \left[-a \psi_x + \big(b + s'\big) \psi \right]_{x=s(t)} \delta u(\shat(t),t) \, dt -\nonumber
  \\
    & \quad -\int_{\Omega} \psi\delta f \,dx \,dt
  + \int_0^T \Big[\psi \big(\chi_x \delta s - \gamma_x s'\delta s - \gamma\delta s' - (a u_x)_x \delta s\big)\Big]_{x=s(t)} \,dt - \nonumber
  \\
    & \quad - \int_{0}^T \psi(0,t) \delta g(t) \, dt
  + \int_0^T \big[(a \psi_x - b \psi) \delta u \big]_{x=0} \, dt - \nonumber
  \\
    & \quad - \int_0^{\shat(T)} \psi(x,T)\delta u(x,T) \,dx + o(\delta v).\label{eq:gradient-heuristic-deltaI-sum-2}
\end{align}

Taking the sum of $\Delta I$ and $\Delta \J$ using~\eqref{eq:functional-increment-1} and~\eqref{eq:gradient-heuristic-split-J-1-final},~\eqref{eq:gradient-heuristic-split-J-2-final},~\eqref{eq:gradient-heuristic-split-J-3-final} gives
\begin{align}
  \Delta \L
    & =
  \int_0^{\shat(T)} \Big[\big(2\beta_0(u-w) - \psi\big) \delta u\Big]_{t=T}\,dx + \nonumber
  \\
    & \quad + \int_0^T \left[-a \psi_x + \big(b + s'\big) \psi + 2\beta_1 (u-\mu)\right]_{x=s(t)} \delta u(\shat(t),t) \,dt + \nonumber
  \\
    & \quad +\int_0^T \big[2\beta_1(u-\mu)u_x \big]_{x=s(t)}\delta s(t) \, dt + \nonumber
  \\
    & \quad + [\beta_0(u(s(T),T)-w(s(T)))^2 + 2 \beta_2(s(T)-s_*)]\delta s(T) + \nonumber
  \\
    & \quad +
  \int_{\Omegahat} \left[(a \psi_x)_x - (\psi b)_x + \psi c + \psi_t\right]\delta u \,dx \,dt + \int_{0}^T \big[(a \psi_x - b \psi) \delta u \big]_{x=0} \, dt + \nonumber
  \\
    & \quad -\int_{\Omega} \psi\delta f \,dx \,dt
  + \int_0^T\Big[ \psi \big(\chi_x\delta s - \gamma_x s' {\delta s} - \gamma \delta s' - (a u_x)_x {\delta s}\big)\Big]_{x=s(t)} \,dt - \nonumber
  \\
    & \quad - \int_{0}^T \psi(0,t) \delta g(t) \, dt + o(\delta v).\label{eq:gradient-heuristic-final-increment}
\end{align}
Due to arbitrariness of the the incremented variables $\delta u$, $\delta s$, etc.\ all of the coefficients on these variables must be zero.
In particular, it follows that $\psi$ should satisfy~\eqref{eq:adj-pde}--\eqref{eq:adj-robin-free} in a pointwise a.e.\ sense.
All of the remaining terms depend linearly on the increment $\delta v$, so the Frechet differential $d\J$ is~\eqref{eq:gradient-full}.
In this form, the adjoint problem~\eqref{eq:adj-pde}--\eqref{eq:adj-robin-free} plays the role of the Lagrange multiplier corresponding to the PDE ``constraint'' in this setting.
\section{Preliminary Results}\label{PreliminaryResults}

\subsection{Existence and Uniqueness of $B_{p,x,t}^{2\l, \l}(Q_T)$-Solutions and Energy Estimates}\label{prel1}

Consider the problem
\begin{gather}
  a u_{xx} + b u_{x} + c u - u_{t}=f~\text{in}~Q_{T}\label{eq:solonnikov-pde}
  \\
  a(0,t)u_{x}(0,t)=\chi_{1}(t),~0 \leq t \leq T
  \\
  a(1,t) u_{x}(1,t) = \chi_{2}(t),~0 \leq t \leq T
  \\
  u(x,0)=\phi(x),~0 \leq x \leq 1.\label{eq:solonnikov-initial}
\end{gather}
Let $\l>1$ be fixed, $p>1$. The following key result is due to Solonnikov~\cite{solonnikov64}
\begin{lem}\label{thm:solonnikov-solution-existence}~\cite[\S7,~thm.\ 17]{solonnikov64}
  Suppose that
  \begin{gather}
    a,b,c \in C_{x,t}^{2\l^*-2,\l^*-1}(Q_T),~\text{arbitrary}~\l^*>\l
    \\
    f \in B_{p,x,t}^{2\l-2,\l-1}(Q_T),\quad
    \phi \in B_p^{2\l-\frac{2}{p}}(0,1),\quad
    \chi_1,\chi_2 \in B_{p}^{\l-\frac{1}{2}-\frac{1}{2p}}(0,T)
  \end{gather}
  and the consistency condition of order
  $k = \left[ \l-\frac{3}{2p}-\frac{1}{2}\right]$
  holds; that is,
  \[
    \D{^j (au_{x})}{x^j}(0,0) = \dD{^j \chi_{1}}{t^j}(0),
    \qquad \D{^j (au_{x})}{x^j}(1,0) = \dD{^j \chi_{2}}{t^j}(0),\quad j=0,\ldots,k.
  \]
  Then the solution $u$ of~\eqref{eq:solonnikov-pde}--\eqref{eq:solonnikov-initial} satisfies the energy estimate
  \begin{gather}
    \norm{u}_{B_{p,x,t}^{2\l,\l}(Q_T)} \leq C \Big[
      \norm{f}_{B_{p,x,t}^{2\l-2,\l-1}(Q_T)}
      + \norm{\phi}_{B_p^{2\l-2/p}(0,1)}
      + \norm{\chi_1}_{B_{p}^{\l-\frac{1}{2}-\frac{1}{2p}}(0,T)} + \nonumber
      \\
      + \norm{\chi_2}_{B_{p}^{\l-\frac{1}{2}-\frac{1}{2p}}(0,T)}
    \Big]\label{eq:solonnikov-energy-est}
    \intertext{when $\l, \l-\frac{3}{2p} \not\in \ZZ_+$, and when $\l \in \ZZ_+$,}
    \norm{u}_{W_{p,x,t}^{2\l,\l}(Q_T)} \leq C \Big[
      \norm{f}_{W_{p,x,t}^{2\l-2,\l-1}(Q_T)}
      + \norm{\phi}_{B_p^{2\l-2/p}(0,1)}
      + \norm{\chi_1}_{B_{p}^{\l-\frac{1}{2}-\frac{1}{2p}}(0,T)} + \nonumber
      \\
    + \norm{\chi_2}_{B_{p}^{\l-\frac{1}{2}-\frac{1}{2p}}(0,T)}\Big].\label{eq:solonnikov-energy-est1}
  \end{gather}
\end{lem}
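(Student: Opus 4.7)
The approach I would take follows the standard scheme for proving sharp regularity for initial-boundary value problems in anisotropic function spaces of fractional order. First I would establish the estimate for a model problem, namely the heat equation $u_t - u_{xx} = f$ on the quarter-space $\R_+\times\R_+$ with an inhomogeneous Neumann condition at $x=0$ and initial data at $t=0$. Using the Fourier transform in $x$ together with the Laplace transform in $t$, one obtains explicit Poisson-type representations for $u$ in terms of $f$, $\phi$, and $\chi_1$. The Besov regularity claim then reduces to boundedness of certain convolution operators whose kernels satisfy parabolic scaling; this can be verified either by Littlewood--Paley decomposition or, more directly, by real interpolation between the integer-order Sobolev cases (which are the classical $W_{p,x,t}^{2m,m}$ estimates of Ladyzhenskaya--Solonnikov--Ural'tseva). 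The equivalent characterizations of $B_p^{\ell}$ via second differences quoted in the excerpt are essential for avoiding logarithmic losses when $\ell-\tfrac{3}{2p}$ is integer.

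Second, I would pass from the model problem to the variable-coefficient problem on $Q_T$ by a localization argument. Choose a finite partition of unity $\{\zeta_j\}$ on $\overline{Q_T}$ with supports of small parabolic diameter $\eta$, freeze the leading coefficient $a$ at the center of each $\mathrm{supp}\,\zeta_j$, and write the equation for $\zeta_j u$ as a constant-coefficient model problem with right-hand sides consisting of commutators $[a\partial_x^2,\zeta_j]u$ plus terms from $b,c$ and from $(a-a_{*})u_{xx}$. The $C_{x,t}^{2\ell^*-2,\ell^*-1}$ regularity of the coefficients, combined with $\ell^*>\ell$, guarantees that $(a-a_*)$ is small in the appropriate multiplier norm on each piece, so that the perturbation terms can be absorbed into the left-hand side for $\eta$ small. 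Summing over $j$ produces the global estimate~\eqref{eq:solonnikov-energy-est}; the commutator and lower-order terms are controlled by $\norm{u}_{B_{p,x,t}^{2\ell-1,\ell-1/2}}$, which is compactly embedded in $B_{p,x,t}^{2\ell,\ell}$ after subtracting a lower-order norm on the right.

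Third, the consistency conditions of order $k=\lfloor \ell-\tfrac{3}{2p}-\tfrac12\rfloor$ appear precisely as the compatibility required at the two corners $(0,0)$ and $(1,0)$ for the trace maps $u\mapsto (au_x)|_{x=0,1}$ to land in $B_p^{\ell-\tfrac12-\tfrac{1}{2p}}$ when the initial datum lies in $B_p^{2\ell-2/p}$. I would verify this by applying the trace/extension theorems for Besov spaces on the corner region, reducing to showing that one can construct an auxiliary function $U_0$ matching $\phi$, $\chi_1(\cdot)$, $\chi_2(\cdot)$ to order $k$; then $u-U_0$ solves a problem with zero traces, which is handled by the model-problem estimate applied after reflection. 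With the a priori estimate in hand, existence follows by the method of continuity: join the operator $a\partial_x^2 + b\partial_x + c - \partial_t$ to the model $\partial_x^2 - \partial_t$ through a parameter $\tau\in[0,1]$, noting that solvability at $\tau=0$ is explicit and the uniform a priori bound extends it to $\tau=1$. Uniqueness is immediate from the estimate applied to a difference. The integer case~\eqref{eq:solonnikov-energy-est1} is then recovered from the Besov--Sobolev coincidence for $p=2$ and $\ell\in\ZZ_+$ stated earlier.

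The main obstacle is the sharp matching of boundary and initial regularities at the corners: getting the trace space $B_p^{\ell-\tfrac12-\tfrac{1}{2p}}$ exactly right, rather than with a loss, requires the careful compatibility bookkeeping of order $k$, and it is precisely at the thresholds $\ell,\ell-\tfrac{3}{2p}\in\ZZ_+$ that the Besov scale must be used in place of the Sobolev scale to keep the estimate linear in the data norms.
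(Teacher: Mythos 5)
This lemma is not proved in the paper at all: it is quoted verbatim from Solonnikov \cite[\S7, thm.\ 17]{solonnikov64}, so there is no internal argument to compare your proposal against. That said, your outline---explicit solution of the constant-coefficient model problem on a half-space by Fourier--Laplace (potential) representations, reduction of the anisotropic Besov bounds to convolution operators with parabolic scaling or to real interpolation between the integer Sobolev cases, the corner compatibility conditions of order $k=\left[\ell-\frac{3}{2p}-\frac12\right]$ as the exact condition for the traces to land in $B_p^{\ell-\frac12-\frac{1}{2p}}(0,T)$, localization with freezing of the leading coefficient and absorption of the small perturbation, and finally the method of continuity for existence with uniqueness from the a priori bound---is precisely the classical strategy by which such estimates are established in the cited source, so the route is sound and appropriate. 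Keep in mind that what you have written is a plan rather than a proof: the genuinely delicate steps (obtaining the sharp trace exponent without loss, handling the integer thresholds $\ell,\ \ell-\frac{3}{2p}\in\ZZ_+$ via the second-difference characterization of the seminorm, and controlling the commutator and frozen-coefficient error terms in the anisotropic norm $B_{p,x,t}^{2\ell,\ell}(Q_T)$ so they can be absorbed) are exactly where the substance of Solonnikov's argument lies, and the paper simply defers all of that to the reference rather than reproducing it.
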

In particular, energy estimates~\eqref{eq:solonnikov-energy-est},~\eqref{eq:solonnikov-energy-est1} imply the existence and uniqueness of the solution in respective spaces $B_{p,x,t}^{2\l,\l}(Q_T)$ or $W_{p,x,t}^{2\l,\l}(Q_T)$. Note that when $k=0$, the consistency condition of order $k$ is the condition of continuity of the boundary functions:
\begin{gather*}
  (au_x)(0,0) = a(0,0)\phi'(0) = \chi_1(0),\quad (au_x)(1,0) = a(1,0)\phi'(1) = \chi_2(0).
\end{gather*}

\subsection{Traces and Embeddings of Besov Functions}\label{traceembedding}
For functions $u \in W_2^{2,1}(\Omega)$, the applicability of the boundary
conditions are justified by the following trace and regularity
results. From~\cite[lem. II.3.3]{ladyzhenskaya68}, recall
\begin{lem}\label{lem:w221embedding}
If $u \in W_2^{2,1}(\Omega)$, then $u$ has a H\"older continuous representative in $\Omega$; in particular, $u \in C_{x,t}^{1/2,1/4}(\overline{\Omega})$. Moreover~\cite[lem. II.3.4]{ladyzhenskaya68},
the following bounded embeddings of traces hold:
\begin{gather*}
  u\big(s(t),t\big),~u(0,t) \in B_2^{3/4}(0,T),\quad
  u_x\big(s(t),t\big),~u_x(0,t) \in B_2^{1/4}(0,T)
  \intertext{and for any fixed $0 \leq \bar{t} \leq T$,}
  u(\cdot,\bar{t}) \in W_2^1(0,s(\bar{t})).
\end{gather*}
\end{lem}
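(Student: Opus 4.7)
The plan is to reduce the statement to the standard parabolic trace and embedding theorems on a fixed rectangular cylinder by straightening the moving lateral boundary $x = s(t)$.

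First, I would introduce the change of variables $y = x/s(t)$, which diffeomorphically maps $\Omega$ onto the fixed cylinder $Q_T = (0,1) \times (0,T]$, and set $\widetilde{u}(y,t) = u(s(t)y,t)$. Because $s \in W_2^2[0,T]$ with $\delta \leq s \leq \l$, one-dimensional Sobolev embedding gives $s \in C^{3/2}[0,T]$, so both $s$ and $1/s$ are Lipschitz in $t$. The chain-rule identities
\[
  \widetilde{u}_y = s u_x, \qquad \widetilde{u}_{yy} = s^2 u_{xx}, \qquad \widetilde{u}_t = u_t + y s' u_x,
\]
together with $s' \in W_2^1[0,T] \hookrightarrow L_\infty[0,T]$, show that $u \in W_2^{2,1}(\Omega)$ if and only if $\widetilde{u} \in W_2^{2,1}(Q_T)$, with norms equivalent up to a constant depending only on $\delta$, $\l$, $T$, and $\norm{s}_{W_2^2[0,T]}$.

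Second, I would invoke the classical Ladyzhenskaya--Solonnikov--Ural'tseva results on $Q_T$: the continuous embedding $W_2^{2,1}(Q_T) \hookrightarrow C_{x,t}^{1/2,1/4}(\overline{Q_T})$; the lateral trace embeddings $\widetilde{u}|_{y=0}, \widetilde{u}|_{y=1} \in B_2^{3/4}(0,T)$ and $\widetilde{u}_y|_{y=0}, \widetilde{u}_y|_{y=1} \in B_2^{1/4}(0,T)$; and the time-slice embedding $\widetilde{u}(\cdot,\bar{t}) \in W_2^1(0,1)$ for every $\bar{t} \in [0,T]$. Translating back, the identities $u(0,t) = \widetilde{u}(0,t)$ and $u(s(t),t) = \widetilde{u}(1,t)$ deliver the $B_2^{3/4}(0,T)$ trace regularity directly; $u(x,t) = \widetilde{u}(x/s(t),t)$ is a composition of a H\"older function with a Lipschitz-in-$x$, Lipschitz-in-$t$ change of variables, which yields $u \in C_{x,t}^{1/2,1/4}(\overline{\Omega})$; and for fixed $\bar{t}$, the linear map $y \mapsto s(\bar{t})y$ transports $\widetilde{u}(\cdot,\bar{t}) \in W_2^1(0,1)$ to $u(\cdot,\bar{t}) \in W_2^1(0, s(\bar{t}))$.

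The main obstacle I expect is the derivative trace statement $u_x(s(t),t) \in B_2^{1/4}(0,T)$. From $u_x(s(t),t) = \widetilde{u}_y(1,t)/s(t)$ and the analogous identity at $x=0$, this reduces to verifying that pointwise multiplication by $1/s$ is a bounded operator on $B_2^{1/4}(0,T)$. This multiplier property follows from $W_2^2[0,T] \hookrightarrow C^{3/2}[0,T]$ together with the standard fact that $C^{\sigma}$ acts by pointwise multiplication on $B_2^{\rho}$ whenever $\sigma > \rho$; since $3/2 > 1/4$ the claim is immediate. Once this multiplier estimate is in hand the remaining steps are routine applications of the chain rule and of the uniform invertibility of the domain-straightening map.
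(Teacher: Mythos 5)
Your proposal is correct, but note that the paper does not prove this lemma at all: it is recalled verbatim from Ladyzhenskaya--Solonnikov--Ural'tseva (Lemmas II.3.3 and II.3.4 of \cite{ladyzhenskaya68}), with the citation standing in for the proof. What you have written is therefore a genuine supplement rather than a parallel argument, and it fills the one step the citation glosses over: the cited results are formulated for a fixed cylindrical domain, whereas here $\Omega$ has the curved lateral boundary $x=s(t)$. Your reduction via $y=x/s(t)$, using $s\in W_2^2[0,T]$, $0<\delta\leq s\leq \l$ to get $s\in C^{1,1/2}[0,T]$ (which is what your ``$C^{3/2}$'' should be read as) and hence the $W_2^{2,1}$-norm equivalence between $u$ on $\Omega$ and $\widetilde{u}$ on $Q_T$, is exactly the device the paper itself uses later (Section~\ref{sec:conseq-of-energy-est}) for the energy estimates, so your route is fully consistent with the paper's framework. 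The two points worth keeping explicit in a polished write-up are the ones you already flagged: the composition estimate showing that precomposition with the Lipschitz map $(x,t)\mapsto(x/s(t),t)$ preserves the $C_{x,t}^{1/2,1/4}$ class (absorbing $|t-t'|^{1/2}$ into $T^{1/4}|t-t'|^{1/4}$), and the multiplier bound for $1/s$ on $B_2^{1/4}(0,T)$, which holds because for Lipschitz $h$ and $g\in B_2^{1/4}$ the cross term $\int\int |g(\tau)|^2|h(t)-h(\tau)|^2|t-\tau|^{-3/2}\,dt\,d\tau$ is controlled by $\norm{g}_{L_2}^2$ alone (the exponent condition $\sigma>\rho$ you cite), so no boundedness of $g$ is needed. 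With those two estimates spelled out, your argument is a complete and correct proof of the recalled lemma.
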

From~\cite[\S 4, thm.\ 9]{solonnikov64}, recall
\begin{lem}\label{thm:solonnikov-traces}
  For a function $u \in B_2^{2\l,\l}(Q_T)$, the following bounded embeddings of traces hold: for any fixed $0 \leq t \leq T$,
  \begin{gather*}
    u(\cdot,t) \in B_2^{2\l-1}[0,1]~\text{when}~\l > 1/2.
    \intertext{For any fixed $0\leq x \leq 1$,}
    u(x,\cdot) \in B_2^{\l-1/4}[0,T]~\text{when}~\l > 1/4
    \\
    u_x(x,\cdot) \in B_2^{\l-3/4}[0,T]~\text{when}~\l > 3/4
    \\
    u_{xx}(x,\cdot) \in B_2^{\l-5/4}[0,T]~\text{when}~\l > 5/4.
  \end{gather*}
\end{lem}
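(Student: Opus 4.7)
The plan is to establish the trace embeddings via Fourier-analytic methods on $\R^2$ and then transfer to $Q_T$ by a bounded extension operator. The key observation is that $B_{2,x,t}^{2\l,\l}(\R^2)$ admits a parabolic Fourier characterization with anisotropic weight: $\|u\|^2 \sim \int_{\R^2}(1 + |\xi|^2 + |\tau|)^{2\l}|\hat{u}(\xi,\tau)|^2\,d\xi\,d\tau$, reflecting the scaling $\partial_t \sim \partial_x^2$. First I would extend $u$ from $Q_T$ to all of $\R^2$ by iterated reflection across the four sides of $Q_T$, using partition-of-unity cutoffs to control the corners.

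Second, to prove $u(\cdot,t_0) \in B_2^{2\l-1}(0,1)$ at a fixed $t_0$, write the partial Fourier transform $\hat{u}(\xi,t_0) = c\int \hat{u}(\xi,\tau)e^{i\tau t_0}\,d\tau$ and apply Cauchy--Schwarz:
\[
|\hat{u}(\xi,t_0)|^2 \leq \left(\int \frac{d\tau}{(1 + |\xi|^2 + |\tau|)^{2\l}}\right) \cdot \int (1 + |\xi|^2 + |\tau|)^{2\l}|\hat{u}(\xi,\tau)|^2\,d\tau.
\]
The $\tau$-integral evaluates, for $\l > 1/2$, to $C(1 + |\xi|^2)^{1-2\l}$; multiplying by $(1 + |\xi|^2)^{2\l-1}$ and integrating in $\xi$ then yields $\|u(\cdot,t_0)\|_{B_2^{2\l-1}}^2 \leq C\|u\|_{B_{2,x,t}^{2\l,\l}}^2$.

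Third, for traces at fixed $x = x_0$, the symmetric Cauchy--Schwarz estimate in $\xi$ produces $\int d\xi/(1 + |\xi|^2 + |\tau|)^{2\l} \sim C(1 + |\tau|)^{1/2 - 2\l}$, convergent exactly when $\l > 1/4$, which gives $\|u(x_0,\cdot)\|_{B_2^{\l-1/4}} \leq C\|u\|$. For the derivative traces, $x$-differentiation corresponds to multiplication by $i\xi$ in Fourier variables, so I would instead estimate $\int |\xi|^{2k}\,d\xi/(1 + |\xi|^2 + |\tau|)^{2\l} \sim C(1 + |\tau|)^{(1+2k)/2 - 2\l}$ for $k=1,2$. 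These converge precisely under $\l > 3/4$ and $\l > 5/4$ and yield the losses of regularity $\l - 3/4$ and $\l - 5/4$ stated in the lemma, matching the thresholds exactly.

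The main technical obstacle is the construction of the extension operator, since $Q_T$ has corners at which naive simultaneous reflection in $x$ and $t$ can destroy the anisotropic regularity. One remedy is Stein's total-extension theorem adapted to parabolic (anisotropic) scales; a more hands-on alternative, which avoids extension altogether, is to work directly with the intrinsic difference-quotient form of the Besov seminorms, bounding the spatial seminorm of $u(\cdot,t_0)$ by averaging $u$ against small time translations through Hardy-type inequalities. This second route is closer in spirit to Solonnikov's original derivation and would transfer readily to the curvilinear domain $\Omega$ appearing elsewhere in the paper.
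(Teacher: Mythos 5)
Your proposal is mathematically sound in outline, but note that the paper does not prove this lemma at all: it is quoted directly from Solonnikov's a priori estimates memoir (\S 4, Thm.~9 of the cited work), where the trace theorems are obtained for general $p$ via integral representations and potential-type estimates for anisotropic Sobolev--Slobodeckij classes. Your route is therefore genuinely different: for $p=2$ you exploit the Plancherel characterization $\norm{u}^2_{B_{2,x,t}^{2\ell,\ell}(\R^2)}\sim\int(1+\lnorm{\xi}^2+\lnorm{\tau})^{2\ell}\lnorm{\hat u}^2\,d\xi\,d\tau$ and a Cauchy--Schwarz argument in the dual variable being traced out; your computations of the $\tau$- and $\xi$-integrals are correct and reproduce exactly the thresholds $\ell>1/2$, $1/4$, $3/4$, $5/4$ and the trace orders $2\ell-1$, $\ell-1/4$, $\ell-3/4$, $\ell-5/4$ (using that $B_{2,2}^s=W_2^s$, as the paper also notes). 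What this buys is a short, transparent, Hilbert-space proof with uniform constants in the fixed $t$ or $x$; what it loses relative to Solonnikov is generality in $p$ and the fact that the hard work is displaced into the extension step. On that step you should be more careful than ``iterated reflection'': plain even reflection across the sides of $Q_T$ preserves anisotropic smoothness only up to low order, and here $2\ell$ may exceed $2$ (indeed $2\ell=5/2+2\alpha$ is the case used in the paper), so one needs Hestenes-type higher-order reflections (linear combinations of dilated reflections, with the parabolic scaling $t\mapsto\lambda^2 t$ matched to $x\mapsto\lambda x$) or a Stein-type total extension adapted to the anisotropic metric, together with the partition of unity at the corners; alternatively your intrinsic difference-quotient variant avoids extension entirely and is indeed closer to Solonnikov's own derivation. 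Since you flag this obstacle and the remedies are standard, I regard the proposal as a correct alternative proof sketch for the $p=2$ case rather than as containing a genuine gap, but the extension (or intrinsic) argument is the part that must actually be written out for the proof to be complete.
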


\subsection{Consequences of Energy Estimates and Embeddings}\label{sec:conseq-of-energy-est}
For given $v=(f,s,g) \in V_R$ transform the domain $\Omega$ to the cylindrical domain $Q_T$
by the change of variables $y = x / s(t)$. Let $d = d(x, t)$, $(x, t) \in \Omega$ stand for any of $a,b,c,f,\gamma,\chi$, define the function $\tilde{d}$ by
\begin{gather*}
  \tilde{d}(x,t) = d\big(x s(t), t\big),~\text{and}~
  \tilde{\phi}(x) = \phi\big( x s(t)\big).
\end{gather*}
The transformed function $\utilde$ is a \emph{pointwise a.e.} solution of the Neumann problem
\begin{gather}
  \frac{1}{s^2}\big(\tilde{a} \utilde_y\big)_y + \frac{1}{s}\big(\tilde{b} + y s'(t)\big) \utilde_y + \tilde{c} \utilde - \utilde_{t} = \tilde{f}, ~\text{in}~Q_T\label{eq:pde-flat}
  \\
  \utilde(x,0) = \tilde{\phi}(x), ~0 \leq x \leq 1\label{eq:pde-init-flat}
  \\
  \tilde{a}(0, t) \utilde_y(0, t) = g(t)s(t), ~0 \leq t \leq T\label{eq:pde-bound-flat}
  \\
  \tilde{a}(1, t) \utilde_y(1, t) = \tilde{\chi}(1, t) s(t) - \tilde{\gamma}(1,t)s'(t)s(t), ~0 \leq t \leq T.\label{eq:pde-free-flat}
\end{gather}
\begin{lem}\label{lem:j-well-defined}
  For fixed $v \in V_R$, there exists a unique solution $u \in W_2^{2,1}(\Omega)$ of the Neumann problem~\eqref{eq:pde-1}--\eqref{eq:pde-stefan} for which the transformed function $\utilde \in \solnspace(Q_T)$ solves~\eqref{eq:pde-flat}--\eqref{eq:pde-free-flat} and satisfies the following energy estimate
  \begin{gather}
    \norm{\utilde}_{\solnspace(Q_T)} \leq C \Big(\norm{f}_{\fcontrolspace(D)} + \norm{\phi}_{\ivspace(0,s_0)} +\nonumber
    \\
    +\norm{g}_{\uxxtrspace[0,T]}
    +\norm{\chi}_{\chigammaspace(D)}
    +\norm{\gamma}_{\chigammaspace(D)}\Big).\label{eq:energy-est-utilde}
  \end{gather}
  where $\alpha^*>\alpha$ is arbitrary.
\end{lem}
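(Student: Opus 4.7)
The plan is to reduce the claim to Solonnikov's theorem (Lemma~\ref{thm:solonnikov-solution-existence}) applied to the flattened problem \eqref{eq:pde-flat}--\eqref{eq:pde-free-flat} on the fixed cylinder $Q_T$, with $p = 2$ and $\l = 5/4 + \alpha$. With this choice the exponents align perfectly: $B_{p,x,t}^{2\l - 2,\l - 1} = \densityspace$, $B_p^{2\l - 2/p} = \ivspace$, and $B_p^{\l - 1/2 - 1/(2p)} = \uxxtrspace$, matching the data spaces in $V_R$ and \eqref{eq:datacond-iv}. Furthermore, the consistency order is $k = [\alpha] = 0$ (for $\alpha \in (0,1)$), so only continuity of the boundary data at $t = 0$ is required; this follows from $g(0) = a(0,0)\phi'(0)$ and $s'(0) = 0$ built into $V_R$, together with the compatibility condition \eqref{eq:datacond-compat}, each transported under $y = x/s(t)$.

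First, I would verify the H\"older-regularity hypothesis for the transformed coefficients of $\utilde_{yy}$, $\utilde_y$, and $\utilde$---namely $\tilde a/s^2$, $\tilde a_y/s^2 + (\tilde b + y s'(t))/s$, and $\tilde c$. Since $s \in W_2^2[0,T] \hookrightarrow C^{1,1/2}[0,T]$ with $\delta \le s \le \l$ uniformly, and since \eqref{eq:datacond-coeff} permits $\alpha^* > \alpha$ to be taken large enough to absorb the doubling of the spatial exponent in $C_{x,t}^{2\l^*-2, \l^*-1}$, these coefficients belong to the required H\"older class on $Q_T$ for some $\l^* > \l$; the uniform ellipticity of $\tilde a/s^2$ is immediate from \eqref{eq:a-ellipticity-cond} and $s \le \l$.

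Next, I would estimate the transformed data. The inhomogeneity $\tilde f \in \densityspace(Q_T)$ and initial value $\tilde\phi \in \ivspace(0,1)$ follow from change-of-variables estimates in Besov seminorms using the smoothness and uniform positivity of $s$. The Neumann datum at $y = 0$, namely $g(t) s(t)$, lies in $\uxxtrspace[0,T]$, controlled by the norms of $g$ and $s$. The most delicate term is the free-boundary data
\[
  h(t) := \tilde\chi(1,t)\, s(t) - \tilde\gamma(1,t)\, s'(t)\, s(t) \in B_2^{1/2+\alpha}[0,T],
\]
whose regularity requires combining the trace embedding of Lemma~\ref{thm:solonnikov-traces} applied to $\chi, \gamma \in \chigammaspace(D)$ (yielding $\tilde\chi(1,\cdot), \tilde\gamma(1,\cdot) \in B_2^{3/4+\alpha^*}[0,T]$), the embedding $W_2^1[0,T] \hookrightarrow B_2^{1/2+\alpha}[0,T]$ applied to $s'$, and the Banach-algebra property of $B_2^{1/2+\alpha}[0,T]$ under pointwise multiplication.

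With all hypotheses verified, Solonnikov's theorem delivers a unique $\utilde \in \solnspace(Q_T)$ satisfying \eqref{eq:pde-flat}--\eqref{eq:pde-free-flat} and the energy estimate \eqref{eq:solonnikov-energy-est}; reversing the change of variables produces a unique $u \in W_2^{2,1}(\Omega)$ solving \eqref{eq:pde-1}--\eqref{eq:pde-stefan} pointwise a.e. Unraveling the bounds on the transformed data in terms of the original data then yields \eqref{eq:energy-est-utilde}. The main obstacle I expect is the careful Besov-norm bookkeeping for the multiplicative transformed data---specifically, establishing sharp product estimates for factors such as $\tilde\gamma(1,t)\, s'(t)$ in $B_2^{1/2+\alpha}[0,T]$ without demanding more regularity on $\chi$, $\gamma$, or $s$ than the hypotheses on $V_R$ and \eqref{eq:datacond-4} actually guarantee.
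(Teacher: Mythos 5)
Your overall route is exactly the paper's: flatten the problem via $y=x/s(t)$, apply Lemma~\ref{thm:solonnikov-solution-existence} with $p=2$, $\l=5/4+\alpha$, check the order-zero consistency conditions, and translate the data norms back. However, there is a genuine gap at the step you dismiss as routine ``change-of-variables estimates,'' and your opening claim that the exponents ``align perfectly\ldots matching the data spaces in $V_R$'' is not correct for $f$: Solonnikov's estimate requires $\tilde f \in \densityspace(Q_T)$, while $V_R$ controls $f$ in $\fcontrolspace(D)$, and these spaces do not coincide --- the discrepancy is precisely where the work of the proof lies. The composition $\tilde f(y,t)=f(ys(t),t)$ does not simply inherit the anisotropic Besov regularity of $f$, because the time seminorm of $\tilde f$ contains the term with $f(ys(t),t)-f(ys(\tau),t)$ divided by $\lnorm{t-\tau}^{1+2(1/4+\alpha)}$, i.e.\ a spatial increment of $f$ measured against a time weight; this cannot be bounded by the $\densityspace(D)$-norm of $f$ alone. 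The paper splits the time seminorm into $I_1+I_2$ and bounds $I_2$ by the CBS inequality, the mean value theorem, Morrey's inequality and $0<\delta\leq s(t)\leq \l$, obtaining $I_2 \leq C\norm{s'}_{W_2^1[0,T]}\,T^{3/4-\alpha}\,\delta^{-1/2}\norm{f_x}_{L_2(D)}$; this is exactly why $\norm{f}_{\fcontrolspace(D)}$ (with a full spatial derivative) appears on the right of~\eqref{eq:energy-est-utilde}, and why the lemma is not a purely formal corollary of Lemma~\ref{thm:solonnikov-solution-existence}. The bounds $\norm{\tilde\chi}_{\chigammaspace(Q_T)}\leq C\norm{\chi}_{\chigammaspace(D)}$ and $\norm{\tilde\gamma}_{\chigammaspace(Q_T)}\leq C\norm{\gamma}_{\chigammaspace(D)}$ are obtained in the same way. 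Your sketch never confronts this mixing of time increments with spatial increments and instead locates the delicate point in the product $\tilde\gamma(1,t)s'(t)$, which is comparatively benign (trace embedding plus $s'\in W_2^1[0,T]\hookrightarrow B_2^{1/2+\alpha}[0,T]$ and the algebra property, essentially as in the paper).

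Two smaller remarks. First, Lemma~\ref{thm:solonnikov-traces} applied to $\chi,\gamma\in\chigammaspace(D)$ corresponds to $\l=3/4+\alpha^*$ and yields traces at fixed $x$ in $B_2^{1/2+\alpha^*}[0,T]$, not $B_2^{3/4+\alpha^*}[0,T]$ as you assert; your conclusion survives because only membership in $B_2^{1/2+\alpha}[0,T]$ is needed and $\alpha^*>\alpha$, but the exponent should be corrected. Second, on the positive side, your explicit verification of the order-zero consistency conditions at both corners (using $g(0)=a(0,0)\phi'(0)$, $s'(0)=0$, and~\eqref{eq:datacond-compat}) and your observation that fitting the transformed coefficients into $C_{x,t}^{2\l^*-2,\l^*-1}(Q_T)$ requires $\alpha^*$ to exceed roughly $2\alpha$ spell out points the paper leaves implicit.
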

\begin{proof}
  Assumptions~\eqref{eq:a-ellipticity-cond}--\eqref{eq:datacond-compat} imply the applicability of Lemma~\ref{thm:solonnikov-solution-existence} with $p = 2$, $\l = 5/4 + \alpha$ to the Neumann problem~\eqref{eq:pde-flat}--\eqref{eq:pde-free-flat}; by energy estimate~\eqref{eq:solonnikov-energy-est},
  \begin{gather}
    \norm{\utilde}_{\solnspace(Q_T)}
    \leq C\Big[
      \norm{\tilde{f}}_{\densityspace(Q_T)}
      + \norm{\tilde{\phi}}_{\ivspace(0,1)} +\nonumber
      \\
      + \norm{g(t)s(t)}_{\uxxtrspace(0,T)} +
      + \norm{\tilde{\chi}(1, t) s(t) - \tilde{\gamma}(1,t)s'(t)s(t)}_{\uxxtrspace(0,T)}
    \Big].\nonumber
  \end{gather}
  By Sobolev trace embedding recalled in Lemma~\ref{thm:solonnikov-traces} 
  we have
  \begin{gather*}
    \norm{\tilde{\chi}(1, t) s(t) - \tilde{\gamma}(1,t)s'(t)s(t)}_{\uxxtrspace(0,T)} \leq C \Big( \norm{\tilde{\chi}}_{\chigammaspace(Q_T)}+
    \\
    +\norm{\tilde{\gamma}}_{\chigammaspace(Q_T)}\Big).
  \end{gather*}
  Considering the term $\norm{\tilde{f}}_{\densityspace(Q_T)}$, by definition
  \begin{gather*}
    \norm{\tilde{f}}_{\densityspace(Q_T)}
    = \norm{\tilde{f}}_{L_2(Q_T)} + \nonumber
    \\
    + \left( \int_0^T \int_0^1 \int_0^1
    \frac{
      \lnorm{f(xs(t),t) - f(ys(t),t)}^2
      }{
      \lnorm{x-y}^{1+2(1/2+\alpha)}
    }
    \, dy \,dx \,d\tau\right)^{1/2} + \nonumber
    \\
    + \left( \int_0^T \int_0^T \int_0^1
    \frac{
      \lnorm{f(ys(t),t) - f(ys(\tau),\tau)}^2
      }{
      \lnorm{t-\tau}^{1+2(1/4+\alpha)}
    }
    \, dy \,dt \,d\tau\right)^{1/2}.\nonumber
  \end{gather*}
  The first two terms are estimated in a straightforward way in terms of the norm of $f$; the last term can be estimated through the sum of two terms $I_1 + I_2$ where
  \begin{align}
    I_1
      & =\left( \int_0^T \int_0^T \int_0^1
    \frac{
    \lnorm{f(ys(\tau),t) - f(ys(\tau),\tau)}^2
    }{
    \lnorm{t-\tau}^{1+2(1/4+\alpha)}
    }
    \, dy \,dt \,d\tau\right)^{1/2},\nonumber
    \\
    I_2
      & =\left( \int_0^T \int_0^T \int_0^1
    \frac{
    \lnorm{f(ys(t),t) - f(ys(\tau),t)}^2
    }{
    \lnorm{t-\tau}^{1+2(1/4+\alpha)}
    }
    \, dy \,dt \,d\tau\right)^{1/2}.\nonumber
    \intertext{$I_1$ can also be estimated in terms of $\norm{f}_{\densityspace(D)}$; the last requires increased regularity of $f$ with respect to the space variable: by CBS inequality,}
    I_2^2
      & \leq \int_0^T \int_0^T \int_0^1 \int_0^1
    \frac{
    \lnorm{f_x\big(y(\theta s(t) + (1-\theta) s(\tau)),t\big)}^2\lnorm{s(t)-s(\tau)}^2
    }{
    \lnorm{t-\tau}^{1+2(1/4+\alpha)}
    }
    \,d\theta \, dy \,dt \,d\tau.\nonumber
    \intertext{By mean value theorem, Morrey's inequality, and condition~\eqref{eq:control-set} on $s$,}
    I_2
      & \leq C\norm{s'}_{W_2^1[0,T]} T^{3/4-\alpha}\frac{1}{\sqrt{\delta}} \norm{f_x}_{L_2(D)}.\nonumber
  \end{align}
  Similarly,
  \begin{gather*}
    \norm{\tilde{\chi}}_{\chigammaspace(Q_T)} \leq
    C\norm{\chi}_{\chigammaspace(D)}
    \\
    \norm{\tilde{\gamma}}_{\chigammaspace(Q_T)} \leq
    C \norm{\gamma}_{\chigammaspace(D)}.
  \end{gather*}
  Estimate~\eqref{eq:energy-est-utilde} follows.
\end{proof}
By the trace embedding result of Lemma~\ref{lem:w221embedding}, it follows that the functional $\J(v)$ of~\eqref{eq:functional} is well defined for $v \in V_R$.
\begin{lem}\label{lem:adjoint-solution-exists}
  For fixed $v \in V_R$, given the corresponding state vector $u=u(x,t;v)$ there exists a unique solution $\psi \in \adjointsolnspace(\Omega)$ of the adjoint problem~\eqref{eq:adj-pde}--\eqref{eq:adj-robin-free} and the following energy estimate is valid
  \begin{equation}\label{eq:psi-energy-est}
    \begin{gathered}
      \norm{\psi}_{\adjointsolnspace(\Omega)} \leq C \Big(\norm{f}_{L_2(\Omega)}
      + \norm{\phi}_{W_2^1(0,s_0)}
      + \norm{g}_{\uxxtrspace[0,T]} +
      \\
      +\norm{\chi}_{B_{2,x,t}^{1+2\alpha,\frac{1}{2}+\alpha}(D)}
      +\norm{s}_{W_2^2[0,T]} \norm{\gamma}_{B_{2,x,t}^{1+2\alpha,\frac{1}{2}+\alpha}(D)}
      + \norm{w}_{W_2^1[0,s(T)]} +
      \\
      + \norm{\mu}_{\muspace(0,T)}\Big).
    \end{gathered}
  \end{equation}
\end{lem}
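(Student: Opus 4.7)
The plan is to mirror the proof of Lemma~\ref{lem:j-well-defined}: perform the time reversal $\tau = T - t$ together with the change of variable $y = x/s(T-\tau)$, converting the backward free-boundary problem~\eqref{eq:adj-pde}--\eqref{eq:adj-robin-free} into a forward parabolic problem with Robin boundary conditions posed on the cylinder $Q_T$. Denote the transformed function by $\tilde{\psi}(y,\tau)$. The transformed equation has the divergence-drift structure of~\eqref{eq:pde-flat} with bounded H\"older continuous coefficients (by~\eqref{eq:datacond-coeff} and $s \in W_2^2[0,T]$), a vanishing right-hand side, a homogeneous Robin condition at $y=0$, initial datum $2\beta_0(u(ys(T),T) - w(ys(T)))$ at $\tau=0$, and an inhomogeneous Robin condition at $y=1$ driven by a multiple of $u(s(T-\tau),T-\tau) - \mu(T-\tau)$ together with lower-order terms generated by the change of variables.

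Apply the $W_2^{2,1}$ parabolic theory from~\cite{ladyzhenskaya68,solonnikov64} (the $p=2$, $\l=1$ instance of Lemma~\ref{thm:solonnikov-solution-existence}, adapted to Robin boundary conditions). At this order the relevant data norms are $L_2(Q_T)$ for the right-hand side, $W_2^1(0,1)$ for the initial datum, and $B_2^{1/4}(0,T)$ for the boundary data. By Lemma~\ref{lem:w221embedding}, the initial datum lies in $W_2^1$ with norm controlled by $\norm{u}_{W_2^{2,1}(\Omega)} + \norm{w}_{W_2^1[0,s(T)]}$, while the trace $u(s(t),t)$ lies in $B_2^{3/4}(0,T) \hookrightarrow B_2^{1/4}(0,T)$ with norm controlled by $\norm{u}_{W_2^{2,1}(\Omega)}$; the boundary contributions involving $\tilde{\chi}$, $\tilde{\gamma}$, and $s'$ are estimated by the same product and trace arguments used in the proof of Lemma~\ref{lem:j-well-defined}. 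Unwinding the change of variable transfers the resulting estimate on $\norm{\tilde{\psi}}_{W_2^{2,1}(Q_T)}$ back to $\norm{\psi}_{W_2^{2,1}(\Omega)}$, bounded by the same combination plus $\norm{u}_{W_2^{2,1}(\Omega)}$.

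It remains to estimate $\norm{u}_{W_2^{2,1}(\Omega)}$ by the right-hand side of~\eqref{eq:psi-energy-est}. This is achieved by a second application of Lemma~\ref{thm:solonnikov-solution-existence} to~\eqref{eq:pde-flat}--\eqref{eq:pde-free-flat}, again at $p=2$, $\l=1$. The required data norms are precisely those appearing on the right of~\eqref{eq:psi-energy-est}: $\norm{f}_{L_2}$, $\norm{\phi}_{W_2^1}$, $\norm{g}_{B_2^{1/4}}$, and a $B_2^{1/4}(0,T)$ bound on $\tilde{\chi}(1,t)s(t) - \tilde{\gamma}(1,t)s'(t)s(t)$; the factor $\norm{s}_{W_2^2[0,T]}$ that appears with $\norm{\gamma}$ is produced when controlling the product $\tilde{\gamma}(1,t)s'(t)s(t)$ in $B_2^{1/4}$ via $s' \in W_2^1[0,T] \hookrightarrow B_2^{1/4}[0,T]$. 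The main technical obstacle is the careful bookkeeping under the change of variables, namely verifying that all products such as $\tilde{\gamma}(1,t)s'(t)s(t)$ and the transformed initial datum $u(ys(T),T)-w(ys(T))$ are controlled in the correct Besov/Sobolev norms with constants uniform over $v \in V_R$; this is handled by the same Cauchy-Schwarz and Morrey arguments used in the proof of Lemma~\ref{lem:j-well-defined}. Combining the two applications of parabolic $W_2^{2,1}$-theory then yields~\eqref{eq:psi-energy-est}.
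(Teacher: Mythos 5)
Your proposal is correct and follows essentially the same route as the paper: existence and the $W_2^{2,1}$ estimate for the third-type (Robin) adjoint problem via the linear parabolic theory of~\cite{ladyzhenskaya68,solonnikov64,solonnikov65}, with the data $u(\cdot,T)-w$ and $u(s(t),t)-\mu$ controlled in $W_2^1$ and $B_2^{1/4}$ through the trace embeddings of Lemmas~\ref{lem:w221embedding} and~\ref{thm:solonnikov-traces}, followed by the $W_2^{2,1}$ energy estimate for $u$ from the transformed Neumann problem, where the product $\tilde\gamma(1,t)s'(t)s(t)$ produces the factor $\norm{s}_{W_2^2[0,T]}\norm{\gamma}_{B_{2,x,t}^{1+2\alpha,\frac{1}{2}+\alpha}(D)}$. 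The paper simply invokes the cited theorems for the adjoint problem on $\Omega$ rather than spelling out your explicit time reversal and change of variables, but this is an implementation detail, not a different argument.
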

\begin{proof}
  By Lemma~\ref{lem:j-well-defined}, there exists a unique solution $u$ of the problem~\eqref{eq:pde-1}--\eqref{eq:pde-stefan}. In particular, $W_2^{2,1}$-norm of the solution satisfies the following energy estimate
  \begin{equation}\label{eq:w221-solution-energy-est}
    \begin{gathered}
      \norm{u}_{W_2^{2,1}(\Omega)} \leq C \Big( \norm{f}_{L_2(\Omega)} + \norm{\phi}_{W_2^1[0,s_0]}
      +\norm{g}_{B_2^{1/4}(0,T)} +
      \\
      + \norm{\chi |_{x=s(t)}-\gamma |_{x=s(t)}s'(t)}_{B_2^{1/4}(0,T)}\Big).
    \end{gathered}
  \end{equation}
  Indeed, by applying~\eqref{eq:solonnikov-energy-est1} to the transformed solution $\utilde$, and due to equivalence of $W_2^1[0,\l]$ and $B_2^1[0,\l]$ norms,~\eqref{eq:w221-solution-energy-est} follows. From the trace embedding result of Lemma~\ref{thm:solonnikov-traces} it follows that
  \begin{equation*}
    \begin{gathered}
      \norm{\chi(s(t),t)}_{B_2^{1/4}(0,T)}+\norm{\gamma(s(t),t)}_{B_2^{1/4}(0,T)}\leq C \Big(\norm{\chi}_{B_{2,x,t}^{1+2\alpha,\frac{1}{2}+\alpha}(D)}+
      \\
      +\norm{\gamma}_{B_{2,x,t}^{1+2\alpha,\frac{1}{2}+\alpha}(D)}\Big).
    \end{gathered}
  \end{equation*}
  and therefore from~\eqref{eq:w221-solution-energy-est} we get
  \begin{gather}
    \norm{u}_{W_2^{2,1}(\Omega)}
    \leq C \bigg( \norm{f}_{L_2(\Omega)} + \norm{\phi}_{W_2^1[0,s_0]} + \norm{g}_{B_2^{1/4}(0,T)} + \norm{\chi}_{B_{2,x,t}^{1+2\alpha,\frac{1}{2}+\alpha}(D)}+ \nonumber
    \\
    +\norm{s}_{W_2^2[0,T]} \norm{\gamma}_{B_{2,x,t}^{1+2\alpha,\frac{1}{2}+\alpha}(D)}\bigg).\label{eq:w221-solution-energy-est-1}
  \end{gather}
  From~\cite[thm. IV.9.1]{ladyzhenskaya68} and~\cite[thm. 5.4]{solonnikov65} it follows that there exists a unique solution $\psi\in \adjointsolnspace(\Omega)$ of the third type adjoint problem~\eqref{eq:adj-pde}--\eqref{eq:adj-robin-free} and the following energy estimate is satisfied:
  \begin{gather}
    \norm{\psi}_{\adjointsolnspace(\Omega)}
    \leq C \left( \norm{u(x,T;v)-w(x)}_{W_2^1[0,s(T)]} + \norm{u(s(t),t)-\mu(t)}_{B_2^{1/4}(0,T)}\right.)\nonumber
    \intertext{By embedding theorem recalled in
      Lemma~\ref{lem:w221embedding}~\cite[lem. II.3.3]{ladyzhenskaya68} it
      follows that}
    \norm{\psi}_{\adjointsolnspace(\Omega)}
\leq C \left( \norm{u}_{\adjointsolnspace(\Omega)} + \norm{w}_{W_2^1[0,s(T)]} + \norm{\mu}_{\muspace(0,T)}\right).\nonumber
  \end{gather}
  Applying energy estimate~\eqref{eq:w221-solution-energy-est-1},~\eqref{eq:psi-energy-est} follows. Lemma is proved.
\end{proof}
\def\ybar{\bar{y}}
The following technical Lemma plays a key role in showing that remainder terms appearing in the derivation of formula~\eqref{eq:gradient-heuristic-final-increment} for the increment are of higher than linear order with respect to $\delta v$, and hence in completing the rigorous proof of the main result.
Take $\utilde$, $\tilde{a}$, etc.\ as in the proof of Lemma~\eqref{lem:j-well-defined}, transform the domain $\Omega_{\bar{s}}$ to $Q_T$ by taking
$\ybar = x / \sbar(t)$, etc.\ in a similar way, and define $\ubartilde$, $\tildebar{a}$, etc.\@ For $d$ standing for any of $u$, $f$, $a$, $b$, $c$, $\gamma$, $\chi$, denote by
\[
  \Delta \tilde{d}(y,t) = \tildebar{d}(y,t)-\tilde{d}(y,t),~(y,t) \in Q_T.
\]
\begin{lem}\label{lem:deltau-main-est}
If $a,b,c$ satisfy~\eqref{eq:a-ellipticity-cond},~\eqref{eq:datacond-coeff} and $\chi,\gamma$ satisfy~\eqref{eq:datacond-4}, then
\begin{gather}
  \norm{\Delta \utilde}_{\solnspace(Q_T)} \to 0~\text{as}~ \delta v \to 0~\text{in}~H.\label{eq:deltau-vanishes}
\end{gather}
\end{lem}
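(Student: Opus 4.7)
The plan is to set up a linear parabolic initial--boundary--value problem for $\Delta\utilde$ on the common cylinder $Q_T$ with forcing and boundary data that vanish with $\delta v$, and then apply the Solonnikov energy estimate of Lemma~\ref{thm:solonnikov-solution-existence} with $p=2$ and $\l = 5/4 + \alpha$. Subtracting the transformed equations~\eqref{eq:pde-flat}--\eqref{eq:pde-free-flat} written for $\ubartilde$ from those written for $\utilde$ yields, pointwise a.e.\ on $Q_T$, a system of the form
\begin{gather*}
  \frac{1}{\sbar^2}\bigl(\tildebar{a}\,\Delta\utilde_y\bigr)_y + \frac{1}{\sbar}\bigl(\tildebar{b} + y\sbar'\bigr)\Delta\utilde_y + \tildebar{c}\,\Delta\utilde - \Delta\utilde_t = F, \\
  \tildebar{a}(0,t)\Delta\utilde_y(0,t) = G_0(t),\qquad \tildebar{a}(1,t)\Delta\utilde_y(1,t) = G_1(t),\qquad \Delta\utilde(x,0) = 0,
\end{gather*}
where the initial increment is identically zero since $\phi$ and $s_0$ are fixed, $F$ collects $\Delta\tilde{f}$ together with coefficient defects such as $\bigl(\tfrac{1}{\sbar^2}-\tfrac{1}{s^2}\bigr)(\tilde{a}\utilde_y)_y$, $\tfrac{1}{\sbar^2}(\Delta\tilde{a}\,\utilde_y)_y$ and their first-order and zero-order analogues, and $G_0 = \bar g\,\sbar - g\,s$, while $G_1$ is the analogous increment of $\tilde\chi(1,t)s - \tilde\gamma(1,t)s'(t)s$.

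Second, applying Lemma~\ref{thm:solonnikov-solution-existence} gives
\[
  \norm{\Delta\utilde}_{\solnspace(Q_T)} \leq C\Bigl(\norm{F}_{\densityspace(Q_T)} + \norm{G_0}_{\uxxtrspace[0,T]} + \norm{G_1}_{\uxxtrspace[0,T]}\Bigr),
\]
with $C$ independent of $\delta v \in H$ thanks to the uniform bounds $\delta \leq \sbar \leq \l$ from the definition of $V_R$ together with the uniform $\coeffspace(Q_T)$-control of $\tildebar{a},\tildebar{b},\tildebar{c}$. I would then show that each summand on the right vanishes by writing it as a product of a factor tending to zero in an appropriate norm and a factor that is uniformly bounded thanks to Lemma~\ref{lem:j-well-defined}. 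For instance, $\tfrac{1}{\sbar^2}-\tfrac{1}{s^2} = \tfrac{s^2-\sbar^2}{s^2\sbar^2}$ tends to zero in $C^1[0,T]$ by the embedding $W_2^2[0,T]\hookrightarrow C^1[0,T]$ and multiplies a uniformly bounded $\densityspace(Q_T)$-quantity; the boundary increment $G_0 = \delta g\cdot\sbar + g\cdot\delta s$ vanishes since $\delta g\to 0$ in $\gcontrolspace[0,T]=\uxxtrspace[0,T]$ and $\delta s \to 0$ in $W_2^2[0,T]$.

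The hard part will be controlling, in the mixed Besov norm $\densityspace(Q_T)$, the composition differences $\Delta\tilde a(y,t) = a(y\sbar(t),t) - a(ys(t),t)$ and analogously $\Delta\tilde f$, $\Delta\tilde\chi$, $\Delta\tilde\gamma$. The strategy mirrors the estimate of the term $I_2$ in the proof of Lemma~\ref{lem:j-well-defined}: for the spatial Besov seminorm I would apply the mean value theorem, using $a_x \in \coeffspace(D)$ and $\norm{\delta s}_{C[0,T]} \to 0$; for the temporal seminorm I would add and subtract $a(y\sbar(t),\tau)$ and apply Cauchy--Bunyakovsky--Schwarz. The crucial fact that $\alpha^* > \alpha$ leaves strict room allows a fraction of the H\"older modulus of $a$ to be traded for a vanishing factor $\norm{\sbar-s}_{C[0,T]}^{\alpha^*-\alpha}\to 0$ while the remaining modulus bounds the integrand. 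The same approach, together with the use of $f_x \in L_2$ for $\Delta\tilde f$ exactly as in Lemma~\ref{lem:j-well-defined}, delivers~\eqref{eq:deltau-vanishes}.
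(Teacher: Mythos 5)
Your overall scheme is the same as the paper's: write down the linear parabolic problem satisfied by $\Delta\utilde$ on $Q_T$ with zero initial data, apply the Solonnikov energy estimate of Lemma~\ref{thm:solonnikov-solution-existence} with $p=2$, $\l=5/4+\alpha$, check that the boundary increments at $y=0,1$ vanish in $\uxxtrspace[0,T]$, and reduce everything to showing that the forcing terms vanish in $\densityspace(Q_T)$; your treatment of the composition differences $\Delta\tilde a$, $\Delta\tilde f$, $\Delta\tilde\chi$, $\Delta\tilde\gamma$ by the mean value theorem, CBS, and the slack $\alpha^*>\alpha$ is also exactly the device used in the paper (cf.\ the estimate of $I_2$ in Lemma~\ref{lem:j-well-defined}).

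There is, however, a genuine gap at the point the paper identifies as the crux. You assert that the coefficient defects multiply ``a uniformly bounded $\densityspace(Q_T)$-quantity'' such as $(\tilde a\utilde_y)_y$ or $\utilde_{yy}$, ``thanks to Lemma~\ref{lem:j-well-defined}.'' This does not follow: the norm of $\solnspace(Q_T)$ is the sum of $\left(\int_0^T\norm{\utilde(\cdot,t)}_{B_2^{5/2+2\alpha}(0,1)}^2\,dt\right)^{1/2}$ and $\left(\int_0^1\norm{\utilde(y,\cdot)}_{B_2^{5/4+\alpha}[0,T]}^2\,dy\right)^{1/2}$, so it controls the spatial Besov regularity of $\utilde_{yy}$ but gives no control whatsoever of its temporal seminorm $\int_0^T\int_0^T\int_0^1\lnorm{\utilde_{yy}(y,t)-\utilde_{yy}(y,\tau)}^2\lnorm{t-\tau}^{-3/2-2\alpha}\,dy\,dt\,d\tau$, which is precisely the time part of the $\densityspace(Q_T)$ norm you need (and likewise in the dominated-convergence variant the paper uses, where this appears as the term $I_{34}$). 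The paper closes this gap by exploiting that $\utilde$ is a pointwise a.e.\ solution of~\eqref{eq:pde-flat}: it solves the equation for $\tilde a\,\utilde_{yy}$ and estimates the time differences of the resulting terms in $\utilde_y$, $\utilde$, $\utilde_t$, $\tilde f$, each of which \emph{is} controlled by the $\solnspace$ norm together with~\eqref{eq:control-set},~\eqref{eq:a-ellipticity-cond},~\eqref{eq:datacond-coeff}. Your proposal never invokes this PDE substitution (nor any mixed-derivative embedding theorem that would substitute for it), so as written the estimate of the $\Gamma_2$-type terms — and hence the whole argument — does not close; the remaining ingredients (zero initial increment, vanishing boundary data, uniform lower bound $\delta\le s,\sbar$, uniform convergence $\sbar\to s$) are fine and agree with the paper.
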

\begin{proof}
  A straightforward calculation shows that $\Delta \utilde$ solves
  \begin{gather}
    \frac{\tildebar{a}}{\bar{s}^2} \Delta \utilde_{yy} + \frac{1}{\bar{s}}\left( \frac{\tildebar{a}_y}{\bar{s}} + \tildebar{b} + y \bar{s}'\right)\Delta \utilde_y + \tildebar{c} \Delta \utilde - \Delta \utilde_t
    = \Delta \tilde{f} +
    \left(\frac{\tilde{a}}{s^2}-\frac{\tildebar{a}}{\bar{s}^2}\right) \utilde_{yy} +\nonumber
    \\
    + \left( \frac{\tilde{a}_y}{s^2} - \frac{\tildebar{a}_y}{\bar{s}^2} + \frac{\tilde{b}}{s} - \frac{\tildebar{b}}{\bar{s}} + \frac{s'}{s}-\frac{\bar{s}'}{\bar{s}}\right) \utilde_y
    - {\Delta \tilde{c}} \utilde\label{eq:deltautilde-pde}
    \intertext{in $Q_T$,}
    \Delta \utilde(x,0) = 0,~0\leq x \leq 1\label{eq:deltautilde-init}
    \\
    \tildebar{a}(0,t)\Delta \utilde_y(0,t) = \delta s(t) g(t)+s(t)\delta g(t)+\delta s(t) \delta g(t),~0 \leq t \leq T,\label{eq:deltautilde-lbc}
    \intertext{and}
    \tildebar{a}(1,t)\Delta \utilde_y(1,t) = -\Delta \tilde{a}(1,t) \utilde_y(1,t)
    +\left( \tildebar{\chi}(1,t)-\tildebar{\gamma}(1,t)\bar{s}'(t)\right)\delta s(t) + \nonumber
    \\
    +\Big(\Delta \tilde{\chi}(1,t) - \tildebar{\gamma}(1,t)\delta s'(t) - \Delta\tilde{\gamma}(1,t)s'(t)\Big)s(t),\label{eq:deltautilde-rbc}
  \end{gather}
  By the energy estimate~\eqref{eq:solonnikov-energy-est} for functions in $\solnspace$, it follows from equations~\eqref{eq:deltautilde-pde}--\eqref{eq:deltautilde-rbc}, and Minkowski inequality that
  \begin{gather}
    \norm{\Delta \utilde}_{\solnspace(Q_T)}
    \leq C \bigg[
      \sum_{i=1}^6 \norm{\Gamma_i}_{\densityspace(Q_T)}+ \nonumber
      \\
      + \norm{\tildebar{a}(0,t) {\Delta \utilde}_y(0,t)}_{\uxxtrspace[0,T]}
      + \norm{\tildebar{a}(1,t) {\Delta \utilde}_y(1,t)}_{\uxxtrspace[0,T]}
    \bigg],\label{eq:deltautilde-calc-1}
  \end{gather}
  where
  \begin{align}
    \Gamma_1
      & =\Delta \tilde{f},\quad
    \Gamma_2
    =\left(\frac{\tilde{a}}{s^2}-\frac{\tildebar{a}}{\bar{s}^2}\right) \utilde_{yy},\quad
    \Gamma_3
    = \left( \frac{\tilde{a}_y}{s^2} - \frac{\tildebar{a}_y}{\bar{s}^2}\right)\utilde_y\nonumber
    \\
    \Gamma_4
      & =\left(\frac{\tilde{b}}{s} - \frac{\tildebar{b}}{\bar{s}}\right)\utilde_y,\quad
    \Gamma_5
    =\left(\frac{s'}{s}-\frac{\bar{s}'}{\bar{s}}\right) \utilde_y,\quad
    \Gamma_6
    =\Delta \tilde{c}\utilde.\nonumber
  \end{align}
  The last two terms in~\eqref{eq:deltautilde-calc-1} converge to zero by~\eqref{eq:deltautilde-lbc},~\eqref{eq:deltautilde-rbc}. The remaining terms are all estimated in a similar way; we demonstrate the estimation for one such term.
  Since $\sbar \to s$ uniformly as $\delta v \to 0$, to prove that $\norm{\Gamma_2} \to 0$, due to Lebesgue's dominated convergence theorem, it follows that it is sufficient to show that the integrands in
  \[
    \norm{\frac{\tilde{a}}{s^2} \utilde_{yy}}_{\densityspace(Q_T)}, \norm{\frac{\tildebar{a}}{\sbar^2} \utilde_{yy}}_{\densityspace(Q_T)}
  \]
  are bounded above by an integrable function. We demonstrate how the first term is estimated; the second is estimated in a nearly identical way. By definition,
  \begin{gather}
    \norm{\frac{\tilde{a}}{s^2} \utilde_{yy}}_{\densityspace(Q_T)}
    = \left(
    \int_0^T\int_0^1 I_1 \,dy \,dt
    \right)^{1/2} + \nonumber
    \\
    + \left(
    \int_0^T\int_0^1\int_0^1 I_2 \,dy \,dx \,dt
    \right)^{1/2}
    + \left(
    \int_0^T\int_0^T\int_0^1 I_3 \,dy \,dt \,d\tau\right)^{1/2},\label{eq:utildeyy-term}
  \end{gather}
  where
  \begin{align}
    I_1
      & =\lnorm{\frac{1}{s^2(t)}a(ys(t),t)\utilde_{yy}(y,t)}^2,\nonumber
    \\
    I_2
      & =\frac{\lnorm{\frac{a(ys(t),t)}{s^2(t)} \utilde_{yy}(y,t) - \frac{a(xs(t),t)}{s^2(t)} \utilde_{yy}(x,t)}^2
    }{\lnorm{x-y}^{1+2(1/2+2\alpha)}}\nonumber
    \\
    I_3
      & =\frac{\lnorm{\frac{a(ys(t),t)}{s^2(t)} \utilde_{yy}(y,t) - \frac{a(ys(\tau),\tau)}{s^2(\tau)} \utilde_{yy}(y,\tau)}^2
    }{\lnorm{t-\tau}^{1+2(1/4+\alpha)}},\nonumber
  \end{align}
  Estimate in $I_1$ using the condition~\eqref{eq:control-set} on $s$ and condition~\eqref{eq:datacond-coeff} on $a$, and~\eqref{eq:energy-est-utilde} to derive
  \begin{align}
    I_1
      & \leq \frac{1}{\delta^2}\norm{a}_{C(D)}^2\lnorm{\utilde_{yy}(y,t)}^2,
  \end{align}
  By~\eqref{eq:control-set},~\eqref{eq:datacond-coeff},
  \begin{align}
    I_2
      & \leq \frac{2}{\delta^2}\Big(
    \l^2\norm{a_x}_{C(D)}^2
    \lnorm{\utilde_{yy}(y,t)}^2
    \lnorm{x-y}^{-4\alpha}+\nonumber
    \\
      & \quad+\norm{a}_{C(D)}^2
    \frac{\lnorm{\utilde_{yy}(y,t) - \utilde_{yy}(x,t)}^2
    }{\lnorm{x-y}^{2+4\alpha}}\Big),
  \end{align}
  Both $I_1$ and $I_2$ are integrable by definition of the norm in $\solnspace(Q_T)$. In a similar way, derive
  \begin{align}
    I_3 & \leq 4 \left(I_{31} + I_{32} + I_{33} + I_{34} \right),\nonumber
    \intertext{where}
    I_{31}
        & =\frac{\lnorm{a(ys(t),t)-a(ys(\tau),t)}^2 \lnorm{\utilde_{yy}(y,t)}^2
    }{\lnorm{t-\tau}^{3/2+2\alpha}s^4(t)},\nonumber
    \\
    I_{32}
        & =
    \frac{\lnorm{a(ys(\tau),t) - a(ys(\tau),\tau)}^2 \lnorm{\utilde_{yy}(y,t)}^2
    }{\lnorm{t-\tau}^{3/2+2\alpha} s^4(t)}\nonumber
    \\
    I_{33}
        & =
    \frac{\lnorm{a(ys(\tau),\tau)}^2 \lnorm{s^2(\tau)-s^2(t)}^2 \lnorm{\utilde_{yy}(y,t)}^2
    }{\lnorm{t-\tau}^{3/2+2\alpha}\lnorm{s^2(t) s^2(\tau)}^2},\nonumber
    \\
    I_{34}
        & =
    \frac{\lnorm{a(ys(\tau),\tau)}^2 \lnorm{\utilde_{yy}(y,t) - \utilde_{yy}(y,\tau)}^2
    }{\lnorm{t-\tau}^{3/2+2\alpha}s^4(\tau)}.\nonumber
  \end{align}
  By estimate~\eqref{eq:control-set} on $s$, assumption~\eqref{eq:datacond-coeff}, mean value theorem and Morrey inequality,
  \begin{gather}
    I_{31} + I_{32} + I_{33} \leq \bigg(
    \frac{C \norm{s'}_{W_2^1[0,T]}^2 T^{1/2-2\alpha}\norm{a_x}_{C(D)}^2}{\delta^4}
    +\frac{\norm{a}_{C_{x,t}^{0,1/4+\alpha^*}(D)}^2}{
      \lnorm{t-\tau}^{1-2(\alpha^*-\alpha)}\delta^4}\nonumber
    \\
    +C \frac{\norm{a}_{C(D)}^2 4 \l^2 \norm{s'}_{W_2^1[0,T]}^2 T^{1/2-2\alpha}}{\delta^4}\bigg)\lnorm{\utilde_{yy}(y,t)}^2,\nonumber
  \end{gather}
  which is integrable by definition of norm in $\solnspace(Q_T)$ and integrability of the singularities on the curve $t=\tau$ in $\R^2$.

  The term $I_{34}$ requires increased time regularity of $\utilde_{yy}$, which, for arbitrary functions in $\solnspace(Q_T)$, does not follow from the definition of the norm. However, since $\utilde$ is a pointwise a.e.\ solution of~\eqref{eq:pde-flat}, estimate~\eqref{eq:control-set} on $s$, and assumption~\eqref{eq:datacond-coeff} ($a \in C(D)$), and triangle inequality, it follows that
  \begin{align}
    I_{34}
    & \leq \frac{4\norm{a}_{C(D)}^2}{\delta^4}\left(I_{34}^1 + I_{34}^2 + I_{34}^3 + I_{34}^4\right)\nonumber
   \end{align}
        where
   \begin{align}
    I_{34}^1
      & =
    \frac{ \lnorm{\left[\frac{\tilde{a}_x + s\tilde{b} + y s' s}{\tilde{a}} \utilde_y\right]_{\tau}^{t}}^2
    }{\lnorm{t-\tau}^{3/2+2\alpha}}\nonumber
    \\
    I_{34}^2
      & =
    \frac{ \lnorm{a(x s(\tau),\tau)s^2(t)\tilde{c}(x,t) \utilde(x,t) - a(x s(t),t)s^2(\tau)\tilde{c}(x,\tau) \utilde(x,\tau)}^2
    }{\lnorm{t-\tau}^{3/2+2\alpha}\lnorm{a(x s(\tau),\tau)a(x s(t),t)}^2}\nonumber
    \\
    I_{34}^3
      & =
    \frac{ \lnorm{a(x s(\tau),\tau)s^2(t)\utilde_{t}(x,t) - a(x s(t),t)s^2(\tau)\utilde_t(x,\tau)}^2
    }{\lnorm{t-\tau}^{3/2+2\alpha} \lnorm{a(x s(\tau),\tau)a(x s(t),t)}^2}\nonumber
    \\
    I_{34}^4
      & =
    \frac{ \lnorm{a(x s(\tau),\tau)s^2(t)\tilde{f}(x,t) - a(x s(t),t)s^2(\tau)\tilde{f}(x,\tau)}^2
    }{\lnorm{t-\tau}^{3/2+2\alpha}\lnorm{a(x s(\tau),\tau)a(x s(t),t)}^2}.\nonumber
  \end{align}
  Each term is now easily estimated using Sobolev embedding;
  we demonstrate the estimation of the highest regularity term $I_{34}^3$.
  By triangle inequality, condition~\eqref{eq:a-ellipticity-cond} on $a$ and condition~\eqref{eq:control-set} on $s$, assumption~\eqref{eq:datacond-coeff} ($a_x \in C(D)$ and $a \in C_{x,t}^{0,1/4+\alpha^*}(D)$,) mean value theorem, and Morrey's inequality,
  \begin{align}
    I_{34}^3
    & \leq \frac{\l^4}{a_0^4}\norm{a_x}_{C(D)}^2\norm{s'}_{W_2^1[0,T]}^2
    \lnorm{\utilde_{t}(y,t)}^2 T^{1/2-2\alpha} +\nonumber
    \\
    &\quad
    +C\frac{\l^4}{a_0^4}\norm{a}_{C_{x,t}^{0,1/4+\alpha}(D)}^2
    \frac{\lnorm{\utilde_{t}(x,t)}^2
    }{\lnorm{t-\tau}^{1-2(\alpha^*-\alpha)}} + \nonumber
    \\
    & \quad + \frac{4\l^2}{a_0^2}\norm{s'}_{W_2^1[0,T]}^2
    T^{1/2-2\alpha} \lnorm{\utilde_{t}(y,t)}^2 + \frac{\l^4}{a_0^2}\frac{ \lnorm{\utilde_{t}(x,t) - \utilde_t(x,\tau)}^2
    }{\lnorm{t-\tau}^{3/2+2\alpha}},
  \end{align}
  which is integrable by the definition of the norm in $\solnspace(Q_T)$.
\end{proof}

\section{Proofs of Main Results}\label{sec:gradient-rigorous}
{\it Proof of Theorem~\ref{thm:existence-opt-control}}. 
Let $\bk{v_n=(s_n,g_n,f_n)} \subseteq V_R$ be a minimizing sequence, i.e.\
\[
  \J(v_n) \to \J_*.
\]
Since $V_R$ is a bounded and closed subset of an Hilbert space $H$, $\bk{v_n}$ is weakly precompact (see e.g.~\cite[ch.\ V, \S\ 2, p.126]{yosida96}); that is, there exists a subsequence $v_{n_k}$ which converges weakly. Assume the whole sequence converges weakly to $v_* = (s_*,g_*,f_*) \in V_R$.
Compact Sobolev embedding theorem implies $v_n \to v$ strongly in $W_2^1[0,T]\times L_2[0,T]\times L_2(D)$. The corresponding solutions $u_n \in W_2^{2,1}(\Omega)$ and transformed solutions $\utilde_n \in W_2^{2,1}(Q_T)$ are uniformly bounded by~\eqref{eq:w221-solution-energy-est}; that is, there exists $C > 0$ such that
\begin{gather*}
  \norm{\utilde_n}_{W_2^{2,1}(Q_T)} \leq C.
  \intertext{It follows that there exists an element $\tilde{v} \in W_2^{2,1}(Q_T)$ such that for some subsequence $n_k$,}
  \utilde_{n_k} \to \tilde{v}~\text{weakly in}~W_2^{2,1}(Q_T).
\end{gather*}
Multiplying~\eqref{eq:pde-1} written for $\utilde_{n_k}$ by an arbitrary test function $\Phi \in L_2(Q_T)$ and passing to the limit as $n_k\to \infty$, it follows that $\tilde{v}$ is a $W_2^{2,1}(Q_T)$ weak solution of~\eqref{eq:pde-flat}--\eqref{eq:pde-free-flat}.
Since all weak limit points of $\bk{\utilde_n}$ are $W_2^{2,1}(Q_T)$ weak solutions of the same equation, by uniqueness of the weak solution, it follows that $\tilde{v} = \utilde_*$ and the whole sequence $\utilde_n \to \utilde_*$ weakly in $W_2^{2,1}(Q_T)$. Sobolev trace theorem~\cite{besov79} then implies that
\begin{gather*}
  \utilde_n(y,T) \to \utilde_*(y,T)~\text{in}~C[0,1],
  \intertext{and hence}
  u_n(y s_n(T),T) \to u_*(y s_*(T),T)~\text{in}~C[0,1].
  \intertext{Together with the convergence of $s_n(t) \to s_*(t)$ uniformly on $[0,T]$, it follows that}
  \int_0^{s_n(T)}\lnorm{u_n(x,T)-w(x)}^2 \, dx - \int_0^{s_*(T)}\lnorm{u_*(x,T)-w(x)}^2 \, dx \to 0.
  \intertext{The other two terms in $\J(v)$ are handled similarly, so it follows that}
  \lim_{n\to\infty} \J(v_n) = \J(v_*) = \J_*.
\end{gather*}
Theorem~\ref{thm:existence-opt-control} is proved.

We lastly give a rigorous justification for the formula given for the first variation of the functional $\J$ in Section~\ref{sec:gradient-heuristic}; repeated calculations will now be omitted with a reference to the previous equation.

The important facts now used are
\begin{itemize}
  \item Under the assumptions~\eqref{eq:a-ellipticity-cond}--\eqref{eq:datacond-compat}, and given $v \in V_R$, there exists a unique solution $u \in \solnspace$ of~\eqref{eq:pde-1}--\eqref{eq:pde-stefan} (Lemma~\ref{lem:j-well-defined}),
  \item Subsequently, the corresponding adjoint $\psi \in \adjointsolnspace$ (Lemma~\ref{lem:adjoint-solution-exists}) also exists.
  \item Then, precise embedding results (Lemmas~\ref{lem:w221embedding} and~\ref{thm:solonnikov-traces}) apply to guarantee that all of the traces appearing in the derivation exist as well.
  \end{itemize}
{\it Proof of Theorem~\ref{thm:gradient-result2}}. 
Proceed just as in Section~\ref{sec:gradient-heuristic}, partitioning the time domain as $[0,T]=T_{1}\cup T_{2}$ as before, and consider first the increment of the functional, $\Delta \J$. Decomposing the increment as in~\eqref{eq:functional-increment-1}, formula~\eqref{eq:gradient-heuristic-split-J-1-final} is valid with $o(\delta v)$ replaced by $R_1 + \cdots + R_5$ where
\begin{align}
  R_1
      & =\Ind_{T_1}(T)\beta_0\int_0^{\sbar(T)} \lnorm{\delta u(x,T)}^2 \, dx,\quad R_2 =\Ind_{T_2}(T)\beta_0\int_0^{s(T)} \lnorm{\delta u(x,T)}^2 \, dx,\nonumber
  \\
  R_3
      & = \Ind_{T_1}(T)\beta_0\left(\lnorm{u(\tilde{s}(T),T) - w(\tilde{s}(T))}^2 - \lnorm{u(s(T),T) - w(s(T))}^2 \right)\delta s(T),\nonumber
  \\
  R_4
  & = \Ind_{T_2}(T)\beta_0\left(\lnorm{\ubar(\tilde{s}(T),T) - w(\tilde{s}(T))}^2 - \lnorm{\ubar(s(T),T) - w(s(T))}^2 \right)\delta s(T),\nonumber
  \\
  R_5
  & = \Ind_{T_2}(T)\beta_0\left(\lnorm{\ubar(s(T),T) - w(s(T))}^2 - \lnorm{u(s(T),T) - w(s(T))}^2\right)\delta s(T).\nonumber
\end{align}
Similarly, formula~\eqref{eq:gradient-heuristic-split-J-2-final} is valid with $o(\delta v)$ replaced by $R_6 + \cdots + R_{10}$ where
\begin{align*}
  R_6
    & = \int_{T_1}\beta_1\lnorm{u_x(\tilde{s}(t),t)\delta s(t) + \delta u(\sbar(t),t)}^2 \,dt,\nonumber
  \\
  R_7
    & =\int_{T_2} \beta_1 \lnorm{\delta u(s(t),t) + \ubar_x(\tilde{s}(t),t)\delta s(t)}^2 \,dt,\nonumber
  \\
  R_8
    & = \int_{T_1}2\beta_1\big( u-\mu\big)_{x=s(t)}\left[u_x\right]^{x=\tilde{s}(t)}_{x=s(t)}\delta s(t) \,dt,\nonumber
  \\
  R_9
    & =\int_{T_2} 2\beta_1\big(u - \mu\big)_{x=s(t)} \left[\ubar_x\right]_{x=s(t)}^{x=\tilde{s}(t)}\delta s(t) \,dt,\nonumber
  \\
  R_{10}
    & =\int_{T_2} 2\beta_1\big( u(s(t),t) - \mu(t)\big) \left(\ubar_x(s(t),t)-u_x(s(t),t)\right)\delta s(t) \,dt.\nonumber
\end{align*}
Lastly,
\begin{align}
  J_3
    & = 2\beta_2(s(T)-s_*)\delta s(T)+R_{11}~\text{where}~
  R_{11}=\beta_2 \lnorm{\delta s(T)}^2.\label{eq:rigorous-split-J-3-final}
\end{align}
Proceeding as before with the term $\Delta I$, formula~\eqref{eq:gradient-heuristic-deltaI-sum-1} is valid with $o(\delta v)$ replaced by $R_{12} + R_{13}$ where
\begin{align}
  R_{12}
    & =\int_{T_1} \int_{\sbar(t)}^{s(t)} \psi \delta f \,dx \,dt,\quad
  R_{13}
  =\int_{T_1} \Big(-a \psi_x + b \psi\Big)\Big\vert_{x=s(t)}^{x=\sbar(t)} \delta u(\sbar(t),t) \, dt.\nonumber
\end{align}
Using the boundary condition~\eqref{eq:pde-stefan} for $\ubar$ on the moving boundary $\sbar$, as in~\eqref{eq:gradient-heuristic-movingbdy-t1-ident}, \eqref{eq:gradient-heuristic-movingbdy-t1-int-ident} it follows that
\begin{gather}
  \int_{T_1} \psi a {\delta u}_x\Big\vert_{x=\sbar(t)}\, dt               =\int_{T_1}\Big[\psi \big(\chi_x {\delta s} - \gamma_x \delta s \sbar'-\gamma {\delta s}'
  - (au_x)_x{\delta s} \big)\Big]_{x=s(t)}\, dt +\nonumber
  \\
  + \sum_{i=14}^{18} R_i,\label{eq:rigorous-movingbdy-t1-int-ident}
  \intertext{where}
  R_{14}
  =\int_{T_1}\left[\psi\right]_{x=s(t)}^{x=\sbar(t)}\Big[\chi_x {\delta s} - \gamma_x {\delta s} s' -\gamma {\delta s}'\Big]_{x=s(t)} \,dt,\nonumber
  \\
  R_{15}
  =\int_{T_1} \left[\psi\right]_{x=\sbar(t)}^{x=s(t)} \Big[(au_x)_x\Big]_{x=\tilde{s}(t)}\delta s(t)\, dt,\nonumber
  \\
  R_{16}
  =\int_{T_1}\psi(s(t),t)\Big[\chi_x -\gamma_x\sbar(t)\Big]^{x=\tilde{s}(t)}_{x=s(t)}\delta s(t) \,dt,\nonumber
  \\
  R_{17}
  =-\int_{T_1}\psi(s(t),t) \gamma_x(s(t),t)\delta s(t){\delta s}'(t) \,dt,\nonumber
  \\ R_{18}
  =-\int_{T_1} \psi(s(t),t) (au_x)_x\Big\vert^{x=\tilde{s}(t)}_{x=s(t)}\delta s(t)\, dt.\nonumber
\end{gather}
Applying the boundary condition~\eqref{eq:pde-stefan} for $u$ on the moving boundary $s$, as in~\eqref{eq:gradient-heuristic-movingbdy-t2-int-ident}, it follows that
\begin{gather}
  \int_{T_2} \psi a \delta u_x \Big\vert_{x=s(t)}\,dt                =\int_{T_1}\Big[\psi \big(\chi_x {\delta s} - \gamma_x \delta s \sbar'-\gamma {\delta s}'
  - (au_x)_x{\delta s}\big)\Big]_{x=s(t)}\, dt +\nonumber
  \\
  + \sum_{i=19}^{22} R_i,\label{eq:rigorous-movingbdy-t2-int-ident}
  \intertext{where}
  R_{19}
  =\int_{T_2} \psi(s(t),t)\Big[\chi_x- \gamma_x\sbar'(t)\Big]^{x=\tilde{s}(t)}_{x=s(t)}\delta s(t) \,dt,\nonumber
  \\
  R_{20}
  =-\int_{T_2} \psi \gamma_x\big\vert_{x=s(t)} {\delta s}'(t)\delta s(t)\,dt,\nonumber
  \\ R_{21}
  =-\int_{T_2} \psi(s(t),t) \Big[(a \ubar_x)_x\Big]^{x=\tilde{s}(t)}_{x=s(t)} \delta s(t) \,dt,\nonumber
  \\
  R_{22}
  =- \int_{T_2} \psi(s(t),t) \Big[\big(a \ubar_x\big)_x-\big(a u_x\big)_x\Big]_{x=s(t)} \delta s(t) \,dt.\nonumber
\end{gather}
Therefore, taking the sum of $\Delta \J$ using decomposition~\eqref{eq:functional-increment-1} with~\eqref{eq:gradient-heuristic-split-J-1-final},~\eqref{eq:gradient-heuristic-split-J-2-final}, and~\eqref{eq:rigorous-split-J-3-final} and $\Delta I$ using identities~\eqref{eq:rigorous-movingbdy-t1-int-ident} and~\eqref{eq:rigorous-movingbdy-t2-int-ident}, as in formula~\eqref{eq:gradient-heuristic-final-increment} the following formula for the increment of $\J$ is valid\def\numri{22}
\begin{align*}
  \Delta \J(v)
    & =d\J(v) + \sum_{i=1}^{\numri} R_i.
\end{align*}
Denote by $C$ a constant depending on the fixed data $\Omega$, $D$, $V_R$, $a$, etc.
For $R_{22}$, applying Morrey's inequality and Sobolev embedding of Lemma~\ref{lem:w221embedding}, derive
\begin{align*}
  \lnorm{R_{22}}
           & \leq C\norm{\psi}_{\adjointsolnspace(\Omega)} \norm{\delta s}_{W_2^1[0,T]} \int_{T_2} \lnorm{\Big[\big(a \ubar_x\big)_x-\big(a u_x\big)_x\Big]_{x=s(t)}} \,dt.
  \intertext{Expand on the right-hand side as}
           & \leq C\norm{\psi}_{\adjointsolnspace(\Omega)} \norm{\delta s}_{W_2^1[0,T]} \int_{T_2} \lnorm{\Big[a_x \ubar_x-a_x u_x + a \ubar_{xx}-a u_{xx}\Big]_{x=s(t)}} \,dt.
  \intertext{By Minkowski inequality and CBS inequality, it follows that}
  \lnorm{R_{22}}
           & = \norm{\psi}_{\adjointsolnspace(\Omega)} \norm{\delta s}_{W_2^1[0,T]}\sqrt{T}\left[R_{22,1} + R_{22,2}\right],
  \intertext{where}
  R_{22,1}
           & =\left(\int_{T_2} \lnorm{\Big[a_x \ubar_{x}-a_x u_{x}\Big]_{x=s(t)}}^2 \,dt \right)^{1/2},\nonumber
  \\
  R_{22,2}
           & =\left(\int_{T_2} \lnorm{\Big[a \ubar_{xx}-a u_{xx}\Big]_{x=s(t)}}^2 \,dt\right)^{1/2}.\nonumber
  \intertext{Take the transformation $y=x/\sbar(t)$ and apply Minkowski inequality to derive}
  R_{22,2}
  & \leq \sqrt{\norm{a}_{C}}\bigg[\left(\int_{T_2} \lnorm{\Delta \utilde_{xx}(1,t)}^2 \,dt\right)^{1/2} + \nonumber
  \\
           & \quad+ \left(\int_{T_2} \lnorm{\utilde_{xx}(1,t)- \utilde_{xx}(s(t)/\sbar(t),t)}^2 \,dt\right)^{1/2}\bigg].
  \intertext{By trace embedding of Theorem~\ref{thm:solonnikov-traces} with $\l=5/4+\alpha$ and Lemma~\ref{lem:deltau-main-est}, it follows that}
           & \left(\int_{T_2} \lnorm{\Delta \utilde_{xx}(1,t)}^2 \,dt\right)^{1/2} \leq \norm{\Delta \utilde}_{\solnspace(Q_T)} \to 0~\text{as}~\delta v \to 0.
  \intertext{Since $\utilde \in \solnspace(Q_T)$, the trace $\utilde_{xx}$ exists; by uniform convergence of $\sbar$ to $s$ as $\delta v \to 0$, it follows that $s/\sbar \to 1$ as $\delta v \to 0$. Then continuity of $L_2$ norm with respect to shift implies}
           & \left(\int_{T_2} \lnorm{\utilde_{xx}(1,t)- \utilde_{xx}(s(t)/\sbar(t),t)}^2 \,dt\right)^{1/2} \to 0~\text{as}~\delta v \to 0.
\end{align*}
Hence $R_{22,2} \to 0$ as $\delta v \to 0$.
The proof that $R_{22,1} \to 0$ as $\delta v \to 0$ is similar.
It follows that $R_{22} = o(\delta v)$.
Each remainder term $R_i$ is shown to be of higher than linear order with respect to $\delta v$ using a similar application of the energy estimates from
Lemmas~\ref{lem:j-well-defined},~\ref{lem:adjoint-solution-exists},~\ref{lem:deltau-main-est}, trace embedding theorem~\cite[\S 3, thm.\ 9]{solonnikov64}, and the continuity of $L_2$ with respect to shift.
Theorem~\ref{thm:gradient-result2} is proved.

\section{Numerical Method}\label{sec:numerics}
Frechet differentiability result of Theorem~\ref{thm:gradient-result2} and the formula~\eqref{eq:gradient-full} for the Frechet differential suggests the following algorithm based on the projective gradient method:
\begin{itemize}
\item {\bf Step 1.} Set $k=0$ and choose initial vector function $v_0=(f_0,g_0,s_0) \in V_R$.
\item {\bf Step 2.} Solve the PDE problem~\eqref{eq:pde-1}--\eqref{eq:pde-stefan} to find $u_k=u(x,t;v_k)$ and $\J(v_k)$.
\item {\bf Step 3.} If $k=0$, move to Step 4. Otherwise, check the following criteria:
\begin{equation}
\lnorm{\J(v_{k+1})-\J(v_k)}<\epsilon,\quad
\norm{v_{k+1}-v_k} < \epsilon,\label{convergencecriteria}
\end{equation}
where $\epsilon$ is the required accuracy. If criteria is satisfied, then terminate the iteration. Otherwise, move to Step 4.
\item {\bf Step 4.} Having $u_k$, solve the adjoined PDE problem~\eqref{eq:adj-pde}--\eqref{eq:adj-robin-free} to find $\psi_k=\psi(x,t;v_k)$.
\item {\bf Step 5.} Choose stepsize parameter $\alpha_k>0$ and compute new control vector $v_{k+1}=(f_{k+1}, g_{k+1}, s_{k+1})\in V_R$ as follows:
\begin{gather}
  f_{k+1}(x,t)=\mathcal{P}_{V_R}(f_k(x,t)+\alpha_k \psi_k(x,t)),\label{gradientupdate_f}\\
  g_{k+1}(t)=\mathcal{P}_{V_R}(g_k(t)+\alpha_k \psi_k(0,t)),\label{gradientupdate_g}\\
  s_{k+1}(t)=\mathcal{P}_{V_R}\Big(s_k(t)+\alpha_k \Big[2 \beta_1 (u_k-\mu) u_{kx} + \nonumber
\\
   \psi_k \left(\chi_x - \gamma_x s_k' -\big(a u_{kx}\big)_x\right)\Big]_{x=s_k(t)}\Big ),\label{gradientupdate_s}\\
  s'_{k+1}(t)=\mathcal{P}_{V_R}\Big(s'_k(t)+\alpha_k \big[\gamma \psi_k\big]_{x=s_k(t)}\Big),\label{gradientupdate_s'}\\
  s_{k+1}(T)=\mathcal{P}_{V_R}\Big(s_k(T)+\alpha_k [\beta_0\lnorm{u_k(s_k(T),T) - w(s_k(T))}^2 + \nonumber
\\
 2\beta_2(s_k(T)-s_*)]\Big),\label{gradientupdate_s(T)}
\end{gather}
where $\mathcal{P}_{V_R}:H \to V_R$ is the projection operator. Replace $k$ with $k+1$ and move to Step 2.
\end{itemize}
Note that since the free boundary $s$ is a component of the control vector,
direct and adjoined PDE problems are solved in a fixed region at every step of
the iteration. It should be also noted that in the application of this method,
equation \eqref{gradientupdate_s} must be coordinated with the equations
\eqref{gradientupdate_s'} and \eqref{gradientupdate_s(T)}. We refer the
implementation of this algorithm and analysis of the numerical results for model
examples in a subsequent paper.

\section{Conclusions}\label{conclusions}
Following the new variational formulation of the inverse Stefan problem introduced in \cite{abdulla13,abdulla15}, optimal control of the second order parabolic free boundary problem is analyzed in the Besov spaces framework in this paper. Existence of optimal control and Frechet differentiability is proved, and the formula for the Frechet differential is derived under minimal regularity assumptions on the data. The result implies a necessary condition for the optimality in the form of variational inequality, and opens a way for the implementation of an effective numerical method based on the projective gradient method in Besov spaces framework. The main idea of the new variational formulation is optimal control setting, where the free boundary is the component of the control vector. This allows for the development of an iterative gradient type numerical method of low computational cost. It also creates a frame for the regularization of the error existing in the information on the phase transition temperature.

 \section*{Acknowledgement}
This research was funded by National Science Foundation: grant \#1359074--REU Site: Partial Differential Equations and Dynamical Systems at Florida Instiute of Technology.
Two REU students Jessica Pillow and Dylanger Pittman worked on part of the project restricted to heuristic derivation given in Section~\ref{sec:gradient-heuristic} with the intention to implement gradient type method for numerical analysis.

\end{document}